\theoremstyle{definition}
\newtheorem{defi}{Definition}[section]
\theoremstyle{remark}
\theoremstyle{plain}
\newtheorem{lem}[defi]{Lemma}
\newtheorem{thm}[defi]{Theorem}
\newtheorem{rem}[defi]{Remark}
\title{Emergent order spectrum for transitive homeomorphisms}
\author{F. Ciavattini$^1$\orcidlink{0009-0007-8101-7929}, 
M. Farotti\footnote{Corresponding author; email to: marco.farotti@unicam.it.} $^2$\orcidlink{0009-0001-5000-2827}, 
C. Lucamarini$^3$\orcidlink{0009-0005-1344-1790}}
\date{
{\normalsize
$^1\;$\footnotesize{\textit{Doctoral School in Mathematics, University of Rome ``Tor Vergata''\\
Via della Ricerca Scientifica, 1, Roma, 00133, Italy}}\\
\vspace{3mm}
$^2\;$\footnotesize{\textit{Doctoral School in Computer Science and Mathematics, University of Camerino,\\ Via Madonna delle Carceri, 9, Camerino, 62032, Italy}}\\
\vspace{3mm}
$^3\;$\footnotesize{\textit{Mathematics Division, School of Sciences and Technology, University of Camerino,\\ Via Madonna delle Carceri, 9, Camerino, 62032, Italy}}
}}
\begin{document}

\maketitle
            
%% Abstract
\begin{abstract}
%% Text of abstract
\noindent The \emph{Emergent Order Spectrum} $\Omega(x,y)$ is a topological invariant of dynamical systems providing order-types induced by the limit order of order-compatible nested $\varepsilon_n$-chains (with $\varepsilon_n\to 0$) from $x$ to $y$. In this paper, we investigate how rich these spectra can be under natural dynamical hypotheses. 
For a transitive homeomorphism $f$ of a compact metric space $X$ without isolated points and of cardinality $\mathfrak{c}$, we show that the global spectrum $\Omega_f(X^2)$ is universal at the countable scattered level: every countable scattered order-type together with the order-type of the rationals appears in $\Omega_f(X^2)$. More precisely, there exists a comeagre subset $\mathcal{M}\subseteq X^2$ such that, for every $(x,y)\in\mathcal{M}$, the individual spectrum $\Omega_f(x,y)$ already realizes all countably infinite scattered order-types; moreover, the order-type of the rationals belongs to $\Omega_f(x,y)$ for every pair $(x,y)\in X^2$.

\vspace{3mm}
\noindent \textbf{Keywords:} Topological Dynamics; Chain Recurrence; Scattered Linear Orderings; Conjugacy Invariants.

\noindent \textbf{MSC2020:} 37B20; 37B35; 06A05.
\end{abstract}

\section{Introduction}

The structure of attractors and the decomposition of a topological dynamical system in gradient-like and chain-transitive subsets are best understood in terms of $\varepsilon$–chains (or pseudo-orbits). The \emph{chain relation} $\mathcal{C}$, indeed, is the basis for both Conley’s decomposition theory \cite{co78} and Akin’s structural approach to attractors (see, e.g., \cite[Theorem~2.68]{kurka03}).
Topological transitivity, on its part (by which here we mean the existence of a point with a dense orbit),  is a dynamical property characterizing many important systems or subsystems, and has been studied deeply in the classical continuous case (see, e.g., \cite{Cran_,Velle_,Wang,Ash__}) and also for discontinuous maps (see, e.g., \cite{Darb_,Darb__,Dev_,Cor_}).

In this work, we investigate the chain relation in the case of transitive maps.  
Specifically, we explore some structural properties of the invariant chain-related backbone of transitive homeomorphisms of compact spaces.

Throughout the paper, by \emph{compact dynamical system} (or simply dynamical system) we mean a pair $(X,f)$ where $X$ is a compact, metrizable space and $f$ is a self-map of $X$.
For $x\in X$, we denote by $\mathcal{O}^+(x)$ and $\mathcal{O}^-(x)$, respectively, the \emph{forward} and \emph{backward orbit} of $x$ under $f$, that is 
$$
\mathcal{O}^+(x)=\{f^{k}(x):k\in\mathbb{N}\}\quad,\quad \mathcal{O}^-(x)=\{f^{k}(x):k\in\mathbb{Z}\setminus \mathbb{N}_0\},
$$ 
where by $\mathbb{N}$ and $\mathbb{N}_0$ we denote, respectively, the sets of positive and non-negative integers.
The two-sided orbit of $x$, or more simply the \emph{orbit} of $x$, is the set $$
\mathcal{O}(x)=\mathcal{O}^+(x)\cup\{x\}\cup \mathcal{O}^-(x).
$$
\begin{defi}
    Let $(X, f)$ be a dynamical system with $f$ a homeomorphism. We say that $f$ is \emph{transitive} if there exists a point $x\in X$ whose orbit is dense in $X$, that is, $\overline{\mathcal{O}(x)}=X$. 
\end{defi}
\begin{rem}\label{rem_trans}
    Notice that if $(X, f)$ is a dynamical system with $f$ a transitive homeomorphism and $X$ has no isolated points, then the set 
    $$
    \mathcal{I}=\{x\in X\,|\,\overline{\mathcal{O}^-(x)}=\overline{\mathcal{O}^+(x)}=X\}
    $$
    of points whose forward and backward orbits are both dense in $X$ is comeagre in $X$ (see, for instance, \cite[pag.~70]{oxtoby}). 
\end{rem}

Let us recall the notions of $\varepsilon$-chain and chain relation.

\begin{defi}\label{chain}
    Let $(X, f)$ be a dynamical system. Given two points $x, y \in X$ and $\varepsilon > 0$, an \emph{$\varepsilon$-chain} from $x$ to $y$ is an indexed, finite sequence of points of $X$, that is a map
    $$C:\{0,1,\dots,m\}\longrightarrow X,$$
    with $m\in\mathbb{N}$, such that, setting $x_i=C(i)$, we have:
    \begin{itemize}
        \item[i)] $x_0 = x$ and $x_m = y$,
        \item[ii)] $d(f(x_i), x_{i+1})<\varepsilon$, for every $i=0,1,\dots,m-1$.
    \end{itemize}
    By $\widehat{C}$, we denote the support of the chain $C$, that is, $$\widehat{C}=C(\{0,1,\dots,m\})=\{x_0,\dots,x_m\}.$$ 
    With an abuse of notation, we may indicate a chain $C$ by writing simply $$C:x_0,\,x_1,\dots,\,x_m.$$

    We say that $x,y \in X$ are in \emph{chain recurrence relation} (or simply in chain relation), and we write $x\,\mathcal{C}\,y$, if and only if, for every $\varepsilon>0$, there exists an $\varepsilon$-chain from $x$ to $y$.
   
\end{defi}

\begin{defi}\label{compchain}
    Let $(X,f)$ be a dynamical system.
    Assuming that, for $x,y\in X$, we have $x\,\mathcal{C}\,y$, we say that the sequence $\{C_n\}_{n\in\mathbb{N}}$ of $\varepsilon_n$-chains from $x$ to $y$ is a \emph{complete sequence of chains} if $\varepsilon_n$ converges to 0 monotonically.
\end{defi}

In \cite{Nested}, it was shown that on compact systems, the chain relation already encodes a much finer canonical structure.

\begin{defi}\label{nested}
  Let $(X,f)$ be a dynamical system. For $x,y\in X$, we say that $x\,\mathcal{C}_\subseteq\, y$ if and only if, for a sequence $\{\varepsilon_n\}_{n\in\mathbb{N}}$ of positive real numbers converging to 0 monotonically, there is a complete sequence $\{C_n\}_{n\in\mathbb{N}}$ of $\varepsilon_n$-chains from $x$ to $y$ such that $\widehat{C_n}\subseteq \widehat{C_{n+1}}$ for every $n\in\mathbb{N}$.

    We will call $\{C_n\}_{n\in\mathbb{N}}$ a \textit{sequence of nested chains for the sequence $\{\varepsilon_n\}_{n=1}^\infty$}.
    % For $x,y\in X$, we say that $x\,\mathcal{C}_\subseteq\, y$ if and only if, for a sequence $\{\varepsilon_n\}_{n\in\mathbb{N}}$ of positive real numbers converging to 0, there is a collection $\{C_n\}_{n\in\mathbb{N}}$ of maps $C_n:\{0,1,\dots,m_n\}\longrightarrow X$ such that, for every $n\in\mathbb{N}$, $C_n$ is an $\varepsilon_n$-chain from $x$ to $y$ and $\widehat{C_n}\subseteq \widehat{C_{n+1}}$.

    % We will call $\{C_n\}_{n\in\mathbb{N}}$ a \textit{ complete sequence of nested chains for the sequence $\{\varepsilon_n\}_{n\in\mathbb{N}}$}.
\end{defi}

\begin{defi}
    For $x,y\in X$ and $\varepsilon>0$ let $C:x_0,\,x_1,\dots,\,x_m$ be an $\varepsilon$-chain  from $x=x_0$ to $y=x_m$. We say that $C$ contains a \emph{cyclic sub-chain} if one of the following conditions holds:
    \begin{itemize}
        \item for some $i,j\in\{1,\dots,m-1\}$ such that $i\neq j$, we have $x_i=x_j$;
        \item for some $i\in\{1,\dots,m-1\}$, we have $x_i=x_0$ or $x_i=x_m$.
    \end{itemize}
    We say that $C$ is \emph{acyclic} if it does not contain any cyclic sub-chain.
\end{defi}

    It is easily seen that any $\varepsilon$-chain $C$ with a cyclic sub-chain can be modified (by suitably eliminating some points) to get an $\varepsilon$-chain $C'$ with no cyclic sub-chain and such that $\widehat{C'}\subseteq \widehat{C}$.  
\begin{defi}\label{preceq}
    Let $(X,f)$ be a dynamical system.
    Assume that two points $x,y\in X$ are such that $x\,\mathcal{C}_\subseteq\,y$. We say that $x\,\mathcal{C}_\preceq\,y$ if the sequence of nested chains $\{C_n\}_{n\in\mathbb{N}}$ can be chosen so as to make all its chains without cyclic sub-chains.    
    % Given two points $x,y\in X$, such that $x\,\mathcal{C}_\subseteq\,y$. We say that $x\,\mathcal{C}_\preceq\,y$ if the sequence of nested chains $\{C_n\}_{n\in\mathbb{N}}$ can be chosen so as to make all its chains with no cyclic sub-chain and, for every $z,w\in \big(\cup_n \widehat{C_n}\big)\setminus\{x ,\,y\}$, exactly one of the following conditions holds:
    % \begin{itemize}
    %     \item [i)] there exists a natural number $N\in\mathbb{N}$ such that $z$ appears before $w$ in the chain $C_n$, for every $n>N$, 
    %     that is,
    %     $$\min\{i \,|\, C_n(i)=z\}<\min\{j \,|\, C_n(j)=w\}\qquad \forall n>N,$$
    %     or,
    %     \item[ii)] there exists a natural number $N\in\mathbb{N}$ such that $w$ appears before $z$ in the chain $C_n$, for every $n>N$, that is,
    %     $$\min\{j \,|\, C_n(j)=w\}<\min\{i \,|\, C_n(i)=z\}\qquad \forall n>N.$$
    % \end{itemize}
    % In this case, we say that $\{C_n\}_{n\in\mathbb{N}}$ is a \emph{complete sequence of ordinately nested chains}.
\end{defi}

In~\cite[Theorems 3.6 and 5.2]{Nested}, it has been proved that the three relations $\mathcal{C}$, $\mathcal{C}_\subseteq$ and $\mathcal{C}_\preceq$ coincide in compact dynamical systems with continuous maps.

\begin{thm}\label{thm:summary-previous}
Let $(X,f)$ be a compact dynamical system with $f$ continuous. For $x,y\in X$ the following are equivalent:
\begin{enumerate}
    \item[(1)] $x\,\mathcal{C}\,y$, i.e. for every $\varepsilon>0$ there exists an $\varepsilon$-chain from $x$ to $y$;
    \item[(2)] $x\,\mathcal{C}_{\subseteq}\,y$, i.e.\ there exists a vanishing sequence $\varepsilon_n\downarrow 0$ and a family of $\varepsilon_n$–chains $C_n$ from $x$ to $y$ with \emph{nested supports} $\widehat{C_n}\subseteq \widehat{C_{n+1}}$;
    \item[(3)] $x\,\mathcal{C}_{\preceq}\,y$, i.e.\ one can choose the nested family in (2) to be \emph{acyclic}.
    % and with \emph{asymptotically compatible linear orders}: for any $z,w$ in $\big(\cup_n\widehat{C_n}\big)\setminus\{x,y\}$, exactly one of ``$z$ precedes $w$ eventually in all $C_n$'' or ``$w$ precedes $z$ eventually in all $C_n$'' holds.
\end{enumerate}
Consequently,
\[
\mathcal{C}\;=\;\mathcal{C}_{\subseteq}\;=\;\mathcal{C}_{\preceq}.
\]

\end{thm}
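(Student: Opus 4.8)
The implications $(3)\Rightarrow(2)\Rightarrow(1)$ are the routine ones, and I would dispose of them first. For $(3)\Rightarrow(2)$ there is nothing to prove, since an acyclic nested family is in particular a nested family. For $(2)\Rightarrow(1)$, given nested chains $\{C_n\}$ with $\varepsilon_n\downarrow 0$, any prescribed $\varepsilon>0$ exceeds some $\varepsilon_n$, and then $C_n$ is itself an $\varepsilon$-chain from $x$ to $y$; hence $x\,\mathcal{C}\,y$. The entire content of the theorem therefore lies in the single implication $(1)\Rightarrow(3)$, which I would attack by first producing a nested family (establishing $\mathcal{C}\Rightarrow\mathcal{C}_\subseteq$) and then arranging it to be acyclic.

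The first thing to record is why the naive strategy fails, since this dictates the whole approach. One cannot hope to take an arbitrary $\varepsilon_n$-chain $C_n\colon a_0,\dots,a_m$ and refine it \emph{edge by edge} into an $\varepsilon_{n+1}$-chain with the same or larger support: a transition only guarantees the inequality $d(f(a_i),a_{i+1})<\varepsilon_n$, which is strictly weaker than $a_i\,\mathcal{C}\,a_{i+1}$, so there need be no finer chain joining $a_i$ to $a_{i+1}$ at all. (The contraction $x\mapsto x/2$ on $[0,1]$, where $x\,\mathcal{C}\,y$ holds only for $y=0$, already exhibits $\varepsilon$-chains through points that cannot be re-routed at a smaller scale.) Consequently nesting cannot be obtained locally: the family must be built coherently, and the only transitions that survive arbitrary refinement for free are the exact orbit steps $a\mapsto f(a)$, whose cost is $0$.

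This suggests the following construction for $(1)\Rightarrow(2)$. I would build each $C_n$ as a concatenation of finite forward-orbit segments $a,f(a),f^2(a),\dots$ glued by a controlled number of small jumps, and pass from $C_n$ to $C_{n+1}$ by \emph{lengthening the orbit segments and relocating the jumps}, never deleting a point, so that $\widehat{C_n}\subseteq\widehat{C_{n+1}}$ is automatic. The orbit portions refine at no cost; the work is to push every jump below $\varepsilon_{n+1}$. Here I would use the two standing hypotheses decisively: uniform continuity of $f$ (from compactness of $X$) to control the one-step error when a finer chain is spliced into the skeleton at a jump site, and the hypothesis $x\,\mathcal{C}\,y$ itself, which supplies the $\varepsilon_{n+1}$-chains to be grafted onto the existing skeleton, extending the orbit blocks until the residual discrepancies fall under $\varepsilon_{n+1}$. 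Iterating over $n$ with $\varepsilon_n\downarrow 0$ would yield the nested family, and hence $x\,\mathcal{C}_\subseteq\,y$.

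The last step, upgrading to $\mathcal{C}_\preceq$, I would fold into the construction rather than perform afterwards. By the reduction noted before Definition~\ref{preceq}, any cyclic sub-chain can be excised without enlarging the support; the difficulty is that excising independently at each level can destroy the inclusions $\widehat{C_n}\subseteq\widehat{C_{n+1}}$. I would instead keep the orbit segments injective and the finitely many gluing points mutually distinct from the outset (possible because one is free to truncate an orbit block before it re-enters a previously used point, and to choose fresh pivots), so that every $C_n$ is acyclic by construction while the nesting is maintained. I expect the genuinely hard part to be exactly this coordination, namely forcing the jumps to shrink to $0$ while simultaneously preserving both the support-inclusions and acyclicity at every stage; everything else is bookkeeping with uniform continuity.
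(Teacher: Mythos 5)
You should first note a point of framing: this paper does not actually prove Theorem~\ref{thm:summary-previous} — it imports it wholesale from \cite[Theorems 3.6 and 5.2]{Nested} — so there is no in-paper argument to compare your proposal against, and it must be judged on its own terms. On those terms, your handling of $(3)\Rightarrow(2)\Rightarrow(1)$ is correct and complete, and your diagnosis of where the difficulty sits is also correct: edge-by-edge refinement fails because $d(f(a_i),a_{i+1})<\varepsilon_n$ is strictly weaker than $a_i\,\mathcal{C}\,a_{i+1}$, and exact orbit steps are the only transitions that refine for free. (A minor slip in your illustration: for $x\mapsto x/2$ on $[0,1]$ one has $x\,\mathcal{C}\,y$ also for every $y\in\mathcal{O}^+(x)$, not only for $y=0$; this does not affect your point.)

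The genuine gap is in your mechanism for $(1)\Rightarrow(2)$, and it is not mere bookkeeping. Passing from $C_n$ to $C_{n+1}$ by ``lengthening the orbit segments and relocating the jumps, never deleting a point'' requires, at each old jump site, a new point $w$ in the finer chain with $d(f(w),b)<\varepsilon_{n+1}$, where $b$ is the seed of the next old orbit block. The hypothesis $x\,\mathcal{C}\,y$ supplies chains from $x$ to $y$ only; it gives no chains \emph{into a prescribed intermediate point} $b$, and your own counterexample shows that such finer access can simply fail to exist — so the proposal refutes naive edge-refinement and then tacitly assumes a pivot-level version of the very same thing. In fact, nesting imposes a hard constraint that your plan never confronts: since every $z\in\bigcup_n\widehat{C_n}\setminus\{x\}$ must be entered at scale $\varepsilon_m$ for all large $m$, one gets $z\in f(X)$, and iterating through the predecessors forces $\bigcup_n\widehat{C_n}\subseteq \mathcal{O}^+(x)\cup\{x\}\cup\bigcap_{k}f^k(X)$. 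In your contraction example with $x=1$, $y=0$, any nested family can therefore use only $\{2^{-k}\mid k\ge 0\}\cup\{0\}$: a point such as $0.3$ is a legitimate member of an $\varepsilon_1$-chain but can never be re-entered at finer scales. So ``choosing fresh pivots'' is not a free move: pivots must be located in a region where re-entry at every scale is guaranteed — inside the eventual image $\bigcap_k f^k(X)$, in practice inside $\omega$-limit sets or chain-recurrence classes through which the relation $x\,\mathcal{C}\,y$ can be factored, with connecting chains between consecutive pivots at all scales. (Compare Lemma~\ref{lem_partial} of this paper, where in the transitive homeomorphism setting exactly this re-entry is secured by dense backward orbits, i.e.\ by extending blocks \emph{backwards}, which your forward-orbit blocks cannot do for a merely continuous $f$.) That localization lemma is the actual heart of the implication, and the proposal does not contain it; without it the induction from $C_n$ to $C_{n+1}$ does not close. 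Your treatment of the acyclicity upgrade, by contrast, is the right idea and is comparatively minor, as you anticipate.
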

\begin{rem}
    The properties in (2)-(3), as shown in~\cite{Nested} (see Remark~2.13, Remark~2.16 and Lemma~2.18), are independent both of the metric and of the particular choice of the vanishing sequence $\{\varepsilon_n\}_{n\in\mathbb{N}}$.
\end{rem}

\begin{defi}
    Let $(A,\leq_A)$ and $(B,\leq_B)$ be two posets. 
    An \emph{order-isomorphism} between $A$ and $B$ is a bijection $f : A \to B$ such that for every $x,y \in A$, $x \leq_A y$ if and only if $f(x) \leq_B f(y)$.
    We write $(A,\leq_A) \cong (B,\leq_B)$ if such a map exists. 
    This defines an equivalence relation on the class of all posets.

    The \emph{order-type} of a poset $(A,\leq_A)$ is the equivalence class
    \[
        \operatorname{otp}(A,\leq_A) \;=\; \{ (B,\leq_B) \mid (B,\leq_B) \cong (A,\leq_A) \}.
    \]
\end{defi}

\begin{rem}
    Of course, the collection $\operatorname{otp}(A,\leq_A)$ is in general not a set but a proper class in ZF. 
    One common way to avoid this size issue is to work inside a \emph{universe of sets} (for example, the cumulative hierarchy $V$ in von Neumann's construction (see, for instance, \cite[p.~63]{Jech})), and to take order-types with respect to the posets contained in that universe. 
\end{rem}

We denote by:
    \begin{enumerate}
        \item [i)] $K$ the finite linear order on $K$ elements;
        \item [ii)] $\omega$ the first infinite ordinal;
        \item [iii)] $\omega^*$ the reverse order-type of $\omega$, that is, the order-type of non-positive integers under the usual relation $\leq$;
        \item[iv)] $\zeta=\omega^*+\omega$ the order-type of the integer numbers $\mathbb Z$;
        \item[v)] $\eta$ the countable dense order-type without extrema; 
        \item[vi)] $\omega_1$ the first uncountable ordinal.
        
        %\item[vi)] \hl{$\text{Lin}_{\aleph_0}$ the set of all countable linear order-types.}
\end{enumerate}
Recall the concept of \emph{very discrete rank} (or \emph{Hausdorff rank}) of a linear order (see~\cite[Def.~5.20]{rosenstein}).
\begin{defi}
    We define inductively for each countable ordinal $\alpha<\omega_1$ a class $VD_\alpha$ as follows 
    \begin{itemize}
        \item $0,1\in VD_0$.
        \item Given a linear order $I$ of order-type $\omega$, $\omega^*$, $\zeta$ or $K$, and for each $i\in I$ a linear order $L_i\in \cup\{VD_\beta \mid \beta<\alpha\}$, then $\sum \{L_i \mid i\in I\}\in VD_\alpha$.
    \end{itemize}
    The set $VD=\bigcup_{\alpha<\omega_1}VD_\alpha$ is called the class of (countable) \emph{very discrete linear orders}. 
    Thus, automatically, given a linear order $L\in VD$, there would be a smallest ordinal $\alpha$, called the \emph{$VD$-rank of $L$} and denoted $\operatorname{rank}_{VD}(L)$, such that $L\in VD_\alpha$.
\end{defi}
For example, we find $\operatorname{rank}_{VD}(\omega)=1$, $\operatorname{rank}_{VD}(\omega^n)=n$ ($n\in\mathbb{N}$), and $\operatorname{rank}_{VD}(\omega^\omega)=\omega$.
% \begin{defi}
%     \hl{Let $(X,f)$ be a dynamical system and take $x,y\in X$ with $x\,\mathcal{C}_\preceq\,y$. 
%     A \emph{weakly order-compatible nested sequence} from $x$ to $y$ is a sequence $\{C_n\}_{n\in\mathbb{N}}$ of acyclic $\varepsilon_n$-chains ($\varepsilon_n\downarrow0$) with nested supports $\widehat{C_n}\subseteq\widehat{C_{n+1}}$ such that for any $z,w\in C$, where $C:=\big(\cup_{n}\widehat{C_n}\big)\setminus\{x,y\}$, exactly one of the following holds:}
%     \begin{itemize}
%         \item \hl{$z$ appears no later than $w$ in $C_n$ for all sufficiently large $n$, or,}
%         \item \hl{$w$ appears no later than $z$ in $C_n$ for all sufficiently large $n$.}
%     \end{itemize}
% \end{defi}
% \begin{rem}
%     \hl{For every $x,y\in X$ such that $x\,\mathcal{C}_\preceq\,y$, given a sequence $\{C_n\}_{n\in\mathbb N}$ of acyclic nested chains from $x$ to $y$, by the classical diagonalizing technique, it is always possible to extract a weakly order-compatible nested sub-sequence $\{C_{n_k}\}_{k\in\mathbb N}$.}
% \end{rem}

\bigbreak

\begin{defi}
    Let $(X,f)$ be a dynamical system and take $x,y\in X$ with $x\,\mathcal{C}_\preceq\,y$. An \emph{order-compatible nested sequence} from $x$ to $y$ is a sequence $\{C_n\}_{n\in\mathbb{N}}$ of acyclic $\varepsilon_n$-chains ($\varepsilon_n\downarrow0$) with nested supports $\widehat{C_n}\subseteq\widehat{C_{n+1}}$ such that, for every $n\in\mathbb{N}$ and for every $z,w\in \widehat{C_n}\setminus\{x,y\}$, if $z$ appears no later than $w$ in $C_n$, namely
    \[
    \min\{i\in \text{dom}(C_n) \mid C_n(i)=z\}  \le \min\{j\in \text{dom}(C_n) \mid C_n(j)=w\} ,
    \]
    then $z$ appears no later than $w$ in $C_{n+1}$, too.
\end{defi}
In~\cite{Nested} it has been proven that, for any pair $(x,y)\in X^2$, the \emph{Emergent Order Spectrum}  $\Omega_f(x,y)$ can be introduced:

\begin{defi}[Emergent Order Spectrum]\label{def:EOS}  
Let $\{C_n\}_{n\in\mathbb{N}}$ be an order-compatible nested sequence from $x$ to $y$ and set
$$
C=\bigcup_{n\in\mathbb{N}} \widehat{C_n}\setminus\{x,y\}.
$$ 
Define a linear order $\le_\infty$ on $C$ by
\begin{align*}
    z\le_\infty w \;\Longleftrightarrow\; &z=w\ \text{ or $z$ appears no later than $w$ in $C_n$} \  \text{for every $n\in\mathbb{N}$}\\
    &\text{such that $z,w\in\widehat{C_n}$}.
\end{align*}
The \emph{Emergent Order Spectrum} (EOS) of the pair $(x,y)\in X^2$ is
\begin{align*}
    \Omega_f(x,y) = \bigl\{\operatorname{otp}\bigl(C,\le_\infty\bigr) \mid \{C_n\}_{n\in\mathbb{N}} \text{ is an order-compatible nested sequence} \ \text{from $x$ to $y$} \bigr\}.
\end{align*}
When there is no ambiguity, we write $\Omega(x,y)$ for $\Omega_f(x,y)$. 
%\hl{For a countable order-type $\xi$, set}
%$$[\xi](x):=\{\,y\in X:\ \xi\in\Omega_f(x,y)\,\}.$$
\end{defi}

The EOS is a topological invariant (see~\cite[Theorem~6.8]{Nested}):
\begin{thm}\label{conj__}
Let $(X,f)$ and $(Y,g)$ be topologically conjugate via a homeomorphism $h:X\to Y$, and set $H=h\times h:X^2\to Y^2$. Then
\[
\Omega_f=\Omega_g\circ H 
\qquad\text{and}\qquad 
\Omega_g=\Omega_f\circ H^{-1}.
\]
Equivalently, for all $(x,y)\in X^2$,
\[
\Omega_f(x,y)=\Omega_g\bigl(h(x),h(y)\bigr).
\]
\end{thm}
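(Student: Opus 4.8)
The plan is to use the conjugacy $h$ as a dictionary between the two systems: I would show that $h$ carries each order-compatible nested sequence from $x$ to $y$ for $f$ to an order-compatible nested sequence from $h(x)$ to $h(y)$ for $g$, and that it does so while inducing an order-isomorphism between the two associated limit orders. Since the hypothesis is symmetric in $f$ and $g$ (the map $h^{-1}$ conjugates $g$ to $f$), the same construction run backwards gives the reverse correspondence, so the two inclusions $\Omega_f(x,y)\subseteq\Omega_g(h(x),h(y))$ and $\Omega_g(h(x),h(y))\subseteq\Omega_f(x,y)$ combine to the claimed equality. The only place where the argument is not a pure transfer is the change of metric: an $\varepsilon$-chain for $f$ need not be an $\varepsilon$-chain for $g$, and this is what compactness is for.

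First I would record two elementary facts. Because $X$ and $Y$ are compact, $h$ and $h^{-1}$ are uniformly continuous; let $\omega_h(\varepsilon)=\sup\{\,d_Y(h(a),h(b)) : a,b\in X,\ d_X(a,b)<\varepsilon\,\}$ be the modulus of continuity of $h$, so that $\omega_h$ is nondecreasing and $\omega_h(\varepsilon)\to0$ as $\varepsilon\to0$; here $d_X,d_Y$ denote the metrics of $X,Y$. Second, topological conjugacy means $h\circ f=g\circ h$, i.e.\ $g(h(z))=h(f(z))$ for every $z\in X$.

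The heart of the proof is the following transfer computation. Given an $\varepsilon$-chain $C:x_0,\dots,x_m$ from $x$ to $y$ for $f$, set $h(C):h(x_0),\dots,h(x_m)$; then for each index $i$,
\[
d_Y\bigl(g(h(x_i)),h(x_{i+1})\bigr)=d_Y\bigl(h(f(x_i)),h(x_{i+1})\bigr)\le\omega_h\bigl(d_X(f(x_i),x_{i+1})\bigr)\le\omega_h(\varepsilon),
\]
so $h(C)$ is a $\delta$-chain from $h(x)$ to $h(y)$ for $g$ with $\delta=\omega_h(\varepsilon)$. Since $h$ is a bijection, it acts injectively on the entries of a chain, and hence preserves every piece of combinatorial data in the definitions: the first-occurrence index of each point is unchanged, so acyclicity is preserved and the relation ``appears no later than'' holds for $z,w$ in $C_n$ exactly when it holds for $h(z),h(w)$ in $h(C_n)$; the supports satisfy $\widehat{h(C_n)}=h(\widehat{C_n})$, so the nesting $\widehat{C_n}\subseteq\widehat{C_{n+1}}$ transfers; and the defining inequality of an order-compatible nested sequence, being phrased solely through first-occurrence indices, transfers verbatim. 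Applying this to a complete order-compatible nested sequence $\{C_n\}_{n\in\mathbb{N}}$ with $\varepsilon_n\downarrow0$, the images $\{h(C_n)\}_{n\in\mathbb{N}}$ form an order-compatible nested sequence for $g$ with gauges $\delta_n=\omega_h(\varepsilon_n)\to0$ (monotone; should strict monotonicity be required, pass to a cofinal subsequence, which alters neither the union nor the limit order because the supports are nested).

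Finally I would match the limit orders. Writing $C=\bigcup_n\widehat{C_n}\setminus\{x,y\}$ and $C'=\bigcup_n\widehat{h(C_n)}\setminus\{h(x),h(y)\}$, injectivity gives $C'=h(C)$ and makes $h|_C:C\to C'$ a bijection. The two limit orders, $\le_\infty$ on $C$ and its analogue $\le_\infty'$ on $C'$, are each read off from first-occurrence indices, which $h$ leaves unchanged; hence $z\le_\infty w\iff h(z)\le_\infty' h(w)$, so $h|_C$ is an order-isomorphism and $\operatorname{otp}(C,\le_\infty)=\operatorname{otp}(C',\le_\infty')$. This yields $\Omega_f(x,y)\subseteq\Omega_g(h(x),h(y))$; repeating the whole argument with the conjugacy $h^{-1}$ of $g$ to $f$ gives the opposite inclusion, hence equality. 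The main obstacle is exactly the bookkeeping of the gauge through $\omega_h$ in the displayed inequality; once compactness supplies uniform continuity, all remaining steps are forced, since $h$ is bijective and every notion involved (acyclicity, nesting, order-compatibility, and $\le_\infty$) depends only on first-occurrence indices and not on the metric.
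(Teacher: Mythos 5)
Your proposal is correct, and the main thing to note is that the paper does not contain a proof of Theorem~\ref{conj__} at all: it is imported from \cite[Theorem~6.8]{Nested}, so there is no in-paper argument to compare against. Your argument is the natural one for this statement: uniform continuity from compactness controls the gauge via the modulus $\omega_h$, bijectivity of $h$ makes every ingredient of Definition~\ref{def:EOS} (supports and their nesting, acyclicity, the first-occurrence relation, hence order-compatibility and $\le_\infty$) transfer verbatim, and the symmetry of conjugacy under $h^{-1}$ upgrades the inclusion $\Omega_f(x,y)\subseteq\Omega_g(h(x),h(y))$ to equality; this is also consistent with the paper's remark that the nested-chain notions are independent of the metric and of the particular vanishing sequence, which is exactly why replacing the gauge $\{\varepsilon_n\}$ by $\{\delta_n\}$ is harmless. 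One cosmetic repair: with the strict-inequality convention of Definition~\ref{chain}, your computation gives $d_Y\bigl(g(h(x_i)),h(x_{i+1})\bigr)\le\omega_h(\varepsilon_n)$, which makes $h(C_n)$ a $\delta$-chain only for $\delta>\omega_h(\varepsilon_n)$ (and your intermediate bound by $\omega_h\bigl(d_X(f(x_i),x_{i+1})\bigr)$ is slightly off for the same reason, since the supremum in $\omega_h(t)$ ranges over pairs at distance strictly less than $t$); taking, say, $\delta_n=\omega_h(\varepsilon_n)+\varepsilon_n$ yields a monotone vanishing gauge and fixes both points, after which nothing else in your argument needs adjustment.
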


For our aim, we recall two results from~\cite{Nested}, namely
Theorems 6.9 and 6.13.

\begin{thm}\label{finite-ordinal}
    Let $x,y\in X$. 
    Then, there is 
    a finite ordinal $K$ such that
    $K\in\Omega(x,y)$ if and only if $y\in\mathcal{O}^+(x)$.
\end{thm}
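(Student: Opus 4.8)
The plan is to prove the two implications separately, with the genuine (exact) orbit segment $x,f(x),\dots,f^k(x)$ playing the central role in both directions, since it is the unique source of a \emph{finite} emergent order.

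For the direction $y\in\mathcal{O}^+(x)\Rightarrow K\in\Omega(x,y)$, I would let $k\ge 1$ be minimal with $f^k(x)=y$ and consider the exact orbit segment $C\colon x_0=x,\,x_1=f(x),\dots,\,x_k=f^k(x)=y$. Because $d(f(x_i),x_{i+1})=0$, this $C$ is an $\varepsilon$-chain for every $\varepsilon>0$; taking the \emph{constant} sequence $C_n\equiv C$ with any $\varepsilon_n\downarrow 0$ yields an order-compatible nested sequence, equal supports making nestedness and order-compatibility automatic. First I would check that $C$ is acyclic: if $f^i(x)=f^j(x)$ with $0\le i<j\le k$, applying $f^{k-j}$ gives $f^{k-(j-i)}(x)=y$ with $k-(j-i)<k$, contradicting minimality unless $i=0,\ j=k$, i.e. the only admissible coincidence is the endpoint identity $x_0=x_k$. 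Removing the two endpoints then leaves the $k-1$ interior points in their chain order, so $\operatorname{otp}(C,\le_\infty)=k-1$, a finite ordinal lying in $\Omega(x,y)$ (equal to $0$ when $k=1$).

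For the converse I would suppose some finite ordinal $K\in\Omega(x,y)$, witnessed by an order-compatible nested sequence $\{C_n\}$ with $C=\bigcup_n\widehat{C_n}\setminus\{x,y\}$ of cardinality $K$. The key observation is that $\bigcup_n\widehat{C_n}\subseteq C\cup\{x,y\}$ is finite, so the nested supports stabilize: there is $N$ with $\widehat{C_n}=S$ for all $n\ge N$, a fixed finite set. Acyclicity then forces each $C_n$ ($n\ge N$) to be an injective enumeration of $S$ starting at $x$ and ending at $y$ (injective off the repeated endpoint when $x=y$), so the chain length is the constant $|S|-1$ and each $C_n$ realizes one of only finitely many orderings of $S$. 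By the pigeonhole principle one ordering $s_0=x,s_1,\dots,s_L=y$ recurs along a subsequence $n_k$; since $C_{n_k}$ is an $\varepsilon_{n_k}$-chain on these fixed points, $d(f(s_i),s_{i+1})<\varepsilon_{n_k}\to 0$, whence $f(s_i)=s_{i+1}$ for all $i$. Therefore $s_i=f^i(x)$ and $y=f^L(x)\in\mathcal{O}^+(x)$ with $L\ge 1$.

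The main obstacle is the converse, and within it the passage from ``finitely many orderings'' to an \emph{exact} orbit: the pigeonhole step is what lets me freeze the combinatorial type of the chain and send $\varepsilon_{n_k}\to 0$ along a single fixed sequence of points, collapsing the $\varepsilon$-chain to a true orbit segment. I would also handle the degenerate case $x=y$ with care: there the stabilized chains are cyclic enumerations $x,s_1,\dots,s_{L-1},x$, and the same limiting argument yields $f^L(x)=x$, so $x$ is periodic and $y=x\in\mathcal{O}^+(x)$; in the extreme subcase $K=0$ the witnessing chain reduces to $x,x$, forcing $d(f(x),x)<\varepsilon_n\to 0$, i.e. $x$ is a fixed point, which again gives $y\in\mathcal{O}^+(x)$.
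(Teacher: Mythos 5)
Your proof is correct, but note that there is nothing in this paper to compare it against: Theorem~\ref{finite-ordinal} is stated here without proof, being quoted from \cite{Nested} (Theorem~6.9), so your argument is effectively a reconstruction of the missing proof. Both directions are sound. In the forward direction, the constant sequence on the exact orbit segment is exactly the natural witness, and your minimality argument for acyclicity is complete: the computation $f^{k-(j-i)}(x)=y$ rules out every coincidence except $i=0$, $j=k$, which is the endpoint identity $x_0=x_m$ that the paper's definition of cyclic sub-chain explicitly permits (only \emph{interior} repetitions, or interior points equal to an endpoint, are forbidden). In the converse, the chain finiteness $\Rightarrow$ support stabilization $\Rightarrow$ pigeonhole $\Rightarrow$ collapse of the $\varepsilon$-chain to a true orbit segment is airtight. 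Two small remarks. First, the pigeonhole step is more than you need: for $n\ge N$ the supports are equal and acyclicity makes each interior point appear exactly once, so order-compatibility (preservation of first-appearance order) already forces all the chains $C_n$, $n\ge N$, to coincide as sequences, and you may pass to the limit along the full tail. Read the other way, your pigeonhole version proves something slightly stronger, since it never uses order-compatibility and thus works for any nested acyclic witnessing sequence. Second, your claim that the stabilized chain length is $|S|-1$ holds only when $x\neq y$ (it is $|S|$ when $x=y$, since then $x$ is counted once in $S$ but occupies both endpoints); this is harmless because you treat the case $x=y$, including the degenerate subcase $K=0$ with the chain $x,x$ forcing $f(x)=x$, separately and correctly.
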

\begin{thm} \label{lemma_transitiva}
    If $(X,f)$ is a transitive dynamical system with $\operatorname{card}(X)=\infty$, then, for every $x,y\in X$, we have $\eta\in\Omega(x,y)$.
\end{thm}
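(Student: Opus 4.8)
The plan is to fix $x,y\in X$ and construct, directly and by hand, a single order-compatible nested sequence $\{C_n\}_{n\in\mathbb{N}}$ of acyclic $\varepsilon_n$-chains from $x$ to $y$ (with $\varepsilon_n\downarrow 0$) whose limit order $(C,\le_\infty)$, in the sense of Definition~\ref{def:EOS}, is a countable dense linear order with no endpoints. By Cantor's characterization of $\eta$ this forces $\operatorname{otp}(C,\le_\infty)=\eta$, and hence $\eta\in\Omega(x,y)$. The only dynamical input needed is a point $p$ whose forward orbit \emph{and all of its tails} are dense: in the present setting one may take $p\in\mathcal{I}$ (cf.\ Remark~\ref{rem_trans}), and since deleting finitely many points from a dense set leaves it dense, $\{f^{n}(p):n\ge m\}$ is dense for every $m$. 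This is precisely what lets me splice a fresh, arbitrarily short orbit segment into any prescribed gap of a chain.

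First I would build the base chain $C_1$: picking $\varepsilon_1>0$, jump from $x$ to an orbit point $f^{k}(p)\in B(f(x),\varepsilon_1)$, follow the orbit forward, and finally jump from some $f^{l}(p)$ with $l\ge k$ and $d(f^{l+1}(p),y)<\varepsilon_1$ into $y$ (such $l$ exists because the tail $\{f^{n}(p):n>k\}$ is dense); deleting repetitions makes $C_1$ acyclic. The core of the argument is the \emph{refinement step}. Given an acyclic $\varepsilon_n$-chain $C_n\colon x=a_0,a_1,\dots,a_m=y$, I would produce an acyclic $\varepsilon_{n+1}$-chain $C_{n+1}$ (say $\varepsilon_{n+1}=\varepsilon_n/2$) with $\widehat{C_n}\subseteq\widehat{C_{n+1}}$ that preserves the left-to-right order of the old points and inserts at least one brand-new point strictly inside \emph{every} gap $a_i\to a_{i+1}$ — crucially including the two boundary gaps $x\to a_1$ and $a_{m-1}\to y$. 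Each gap is rebuilt as a short orbit segment: from $a_i$ jump to a fresh $u\in B(f(a_i),\varepsilon_{n+1})$, follow the orbit, and land within $\varepsilon_{n+1}$ of $a_{i+1}$. Because each such ball meets the orbit in infinitely many points, the inserted points can always be chosen outside the finite set of points already used, which at once guarantees their newness and keeps $C_{n+1}$ acyclic.

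Iterating produces the sequence $\{C_n\}$, and it then remains to read off the order-type of $(C,\le_\infty)$ with $C=\bigcup_n \widehat{C_n}\setminus\{x,y\}$. Countability is clear. Density holds because for $z<_\infty w$ one takes $N$ with $z,w\in\widehat{C_N}$: either some point of $C_N$ already lies between them (and order-compatibility keeps it strictly between in $\le_\infty$), or they are adjacent in $C_N$ and the refinement at stage $N{+}1$ drops a new point strictly between them. Absence of a least (resp.\ greatest) element comes from always refining the boundary gap $x\to a_1$ (resp.\ $a_{m-1}\to y$): this creates an infinite $\le_\infty$-descending (resp.\ ascending) sequence of freshly inserted points, so no element can be extremal. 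A countable dense linear order without endpoints is $\eta$, and since the $\{C_n\}$ we built is by construction order-compatible, nested and acyclic, we conclude $\eta\in\Omega(x,y)$.

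The step I expect to be the real obstacle is the refinement lemma, specifically the bookkeeping that makes its three demands simultaneously satisfiable: that every gap (boundary gaps included) becomes a genuine $\varepsilon_{n+1}$-chain through the dense orbit, that all newly inserted points are pairwise distinct and distinct from all earlier ones so acyclicity and strict growth survive, and that the order of the points already present is never disturbed so the family stays order-compatible. Conceptually the delicate point is the insistence on refining the two boundary gaps: this is exactly what removes a minimum and a maximum in the limit and thereby pins the order-type down to $\eta$, rather than to some dense order carrying one or two endpoints.
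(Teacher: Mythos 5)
Your construction cannot be compared line-by-line with ``the paper's own proof'' for a simple reason: the paper does not prove Theorem~\ref{lemma_transitiva} at all --- it is recalled verbatim from \cite{Nested} (Theorem~6.13). Judged on its own, your back-and-forth scheme (refine every gap, boundary gaps included, with fresh orbit segments, then invoke Cantor's characterization of $\eta$) is the natural construction, its combinatorial core is sound, and it is close in spirit to the orbit-splicing used in Lemma~\ref{lem_partial}.

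The genuine gap is your very first move: ``one may take $p\in\mathcal{I}$ (cf.\ Remark~\ref{rem_trans})''. That remark carries the hypothesis that $X$ has \emph{no isolated points}, which Theorem~\ref{lemma_transitiva} does not assume; it assumes only transitivity in the sense of Definition~1.1 (a dense \emph{two-sided} orbit of a homeomorphism) and $\operatorname{card}(X)=\infty$. Under those literal hypotheses your $p$ need not exist. Take $X=\mathbb{Z}\cup\{-\infty,+\infty\}$, metrized via $n\mapsto\arctan n$, with $f(n)=n+1$ and $f(\pm\infty)=\pm\infty$: this is a compact infinite system, the orbit of $0$ is dense, yet no point has a dense forward or backward orbit, so $\mathcal{I}=\varnothing$, and every integer point is isolated, so your claim that ``each such ball meets the orbit in infinitely many points'' fails as well. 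Worse, in this example the conclusion itself fails: if $\varepsilon<\arctan 2$, any $\varepsilon$-chain starting at $+\infty$ consists of points $>0$ (a step from $x_{i-1}\geq 1$ lands within $\varepsilon$ of $f(x_{i-1})\geq 2$, hence cannot reach $\{n\leq 0\}\cup\{-\infty\}$), so $\Omega(+\infty,-\infty)=\varnothing$ and $\eta\notin\Omega(+\infty,-\infty)$. So the recalled theorem must be read with the conventions of \cite{Nested}, where transitivity amounts to a dense \emph{forward} orbit. Your proof becomes complete under that reading once you supply the missing lemma: for a homeomorphism of an infinite compact metric space with a dense forward orbit, $X$ has no isolated points. (Proof: if $z$ is isolated, then $z=f^{n}(x)$ for the transitive point $x$, so $\{x\}=f^{-n}(\{z\})$ is open, hence so is $\{f^{-1}(x)\}$, which must meet the dense set $\mathcal{O}^{+}(x)$; this forces $x=f^{k+1}(x)$ for some $k\geq 0$, making $X$ a finite orbit --- contradiction.) With this lemma, Remark~\ref{rem_trans} applies and the rest of your argument runs.

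One piece of bookkeeping also needs repair. Once the entry point $u=f^{r}(p)$ of an inserted segment is chosen, the interior points $f(u),\dots,f^{j}(u)$ are \emph{forced}, so you cannot choose them ``outside the finite set of points already used'' one by one. The correct fix is index-based: since $X$ is infinite, the orbit $n\mapsto f^{n}(p)$ is injective, so each ball $B(f(a_i),\varepsilon_{n+1})$ contains orbit points of arbitrarily large index; pick $r$ larger than every orbit index already occurring in $\widehat{C_n}$ (and than any index $t$ with $f^{t}(p)\in\{x,y\}$), and let the segment run until its image first re-enters $B(a_{i+1},\varepsilon_{n+1})$ --- which happens because every tail $\{f^{n}(p):n\geq r\}$ is dense, density being preserved under deletion of finitely many points precisely because $X$ has no isolated points. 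Then the whole segment is automatically fresh, acyclicity and order-compatibility survive, and your limit argument (density from gap-insertion, no extrema from refining the two boundary gaps) correctly pins down $\operatorname{otp}(C,\le_\infty)=\eta$.
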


\bigskip

In this paper, we investigate how large the EOS can be under natural dynamical hypotheses.
Our main result, Theorem~\ref{long_t}, shows that for a transitive homeomorphism on a compact metric space without isolated points of cardinality $\mathfrak{c}$, the global spectrum $\Omega(X^2)$ contains every countable scattered order-type together with the countable dense order, and for a comeagre set of pairs $(x,y)$ the individual spectrum $\Omega(x,y)$ already realizes all countably infinite scattered types. 
Thus, chain recurrence, when combined with topological transitivity, encodes the entire Hausdorff hierarchy of countable scattered orders inside a single conjugacy invariant.
The proof combines set-theoretic $\alpha$-structures on orbit classes with Hausdorff’s analysis of scattered linear orders, and uses them to organize families of nested $\varepsilon_n$-chains according to their $VD$-rank and to assemble them via sums indexed by $K$, $\omega$, $\omega^*$, or $\zeta$.

We emphasize that our arguments are self-contained with respect to the EOS: 
nowhere do we assume a priori an existence theory for the Emergent Order Spectrum in the transitive setting.
Instead, the proof of our main result is constructive, and, in the process of realizing the desired order-types, it simultaneously yields the existence of a sequence of order-compatible nested $\varepsilon_n$-chains for transitive homeomorphisms. 
In this sense, Theorem~\ref{long_t} can be viewed as an independent existence result for the EOS under our hypotheses.
%so that the present work does not depend logically on \cite{Nested}. 

\section{Results}

Let us recall that a \emph{scattered} linear order is a linear order that does not contain a subset order-isomorphic to the rational numbers $\mathbb{Q}$.
A classical representation theorem by Hausdorff provides a general expression of linearly ordered sets in terms of scattered sets (see \cite[Theorem~4.9]{rosenstein}):

\begin{thm}[Hausdorff]
Any linear order \( L \) is a dense sum of scattered linear orders; that is, there is a dense linear order \( L_* \) and a map \( h \) from \( L_* \) to scattered linear orders such that $$ L = \sum \{ h(i) \mid i \in L_* \}. $$
\end{thm}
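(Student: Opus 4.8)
The plan is to realize the decomposition through a canonical equivalence relation on $L$ whose classes are the scattered ``blocks'' and whose quotient plays the role of the dense index order $L_*$. Concretely, for $x,y\in L$ I would declare $x\sim y$ precisely when the closed interval between them, namely $[\min(x,y),\max(x,y)]$, is scattered. The three things to verify are: (i) $\sim$ is an equivalence relation; (ii) each class is a convex, scattered suborder of $L$; and (iii) the linear order induced on the set of classes is dense. Granting these, the classes furnish the values of $h$, the quotient $L_*=L/{\sim}$ is the dense index order, and $L=\sum\{C\mid C\in L_*\}$ is the desired representation; in the degenerate case where $L$ itself is scattered the quotient collapses to a single point, giving the trivial one-term sum.

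The two elementary closure facts I would isolate first are that a suborder of a scattered order is scattered (immediate, since a copy of $\eta$ inside a subset is a copy inside the whole) and that the sum $A+B$ of two scattered orders is scattered. For the latter I would argue by contradiction: a copy $D\cong\eta$ inside $A+B$ meets $A$ in an initial segment and $B$ in a final segment of $D$; each piece is a convex suborder of $D$, so if either piece contained two points it would already contain a copy of $\eta$ and thus fail to be scattered, forcing $D$ to have at most two points, which is absurd. Reflexivity and symmetry of $\sim$ are immediate; for transitivity I would run the case analysis on the relative positions of $x,y,z$, observing in each case that the interval between $x$ and $z$ is either contained in one of the two given scattered intervals (hence scattered by the suborder fact) or is the gluing of the two along $y$ (hence scattered by the two-term sum fact). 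Convexity of classes is the same interval observation, and each class $C$ is scattered because a copy of $\eta$ inside $C$ would sit between two of its own points $a<b$, hence inside the scattered interval $[a,b]$, a contradiction.

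For density I would take two distinct classes $C_1<C_2$ with representatives $x\in C_1$, $z\in C_2$ and $x<z$. Since $x\not\sim z$, the interval $[x,z]$ is non-scattered and therefore contains a copy $Q\cong\eta$; as $\eta$ has no endpoints, $x$ and $z$ cannot lie in $Q$, so $Q\subseteq(x,z)$. Picking any $q\in Q$ and using that $\eta$ splits at $q$ into two suborders each again isomorphic to $\eta$ (Cantor's back-and-forth characterization of the countable dense order without endpoints), I get that both $[x,q]$ and $[q,z]$ contain copies of $\eta$, so $x\not\sim q$ and $q\not\sim z$. By convexity the class of $q$ lies strictly between $C_1$ and $C_2$, producing an interpolating class and establishing density of $L_*$.

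I expect the main friction to be bookkeeping rather than conceptual: the transitivity case analysis and the verification that ``$C_1<C_2$ iff every element of $C_1$ precedes every element of $C_2$'' is well defined and total both rest on classes being convex, so the cleanest route is to prove convexity as early as possible and reuse it throughout. The one genuinely delicate point is ensuring that the splitting of $Q$ in the density step yields nondegenerate copies of $\eta$ on \emph{both} sides of $q$, which is exactly where the endpoint-freeness of $\eta$ and Cantor's characterization do the real work.
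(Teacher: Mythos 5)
The paper states this result without proof, citing it as classical (\cite[Theorem~4.9]{rosenstein}); your argument is exactly the standard condensation proof given there — declare $x\sim y$ iff $[x,y]$ is scattered, verify via the two closure facts (suborders and two-term sums of scattered orders are scattered) that $\sim$ is an equivalence with convex scattered classes, and split a copy of $\eta$ at an interior point to get density of the quotient — and it is correct, including the degenerate one-class case when $L$ itself is scattered. The only cosmetic slip is the phrase that a copy of $\eta$ inside a class ``would sit between two of its own points $a<b$'': the whole copy need not, but its restriction to $(a,b)$ does and is again a copy of $\eta$, which is all the contradiction requires.
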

In turn, scattered sets admit a complete classification, also due to Hausdorff (see \cite[Theorem 5.24]{rosenstein}):
\begin{thm}[Hausdorff]
    The set of countable scattered linear orders is the smallest set of orders that contains singleton orders and is closed under concatenation indexed by K, $\omega$, or $\omega^*$.
\end{thm}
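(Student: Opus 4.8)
The plan is to prove the two inclusions separately. Write $\mathcal{S}$ for the smallest class of order-types containing the singletons and closed under concatenations indexed by $K$, $\omega$, and $\omega^*$, and write $\mathcal{SC}$ for the class of countable scattered order-types; the goal is $\mathcal{S}=\mathcal{SC}$. For the inclusion $\mathcal{S}\subseteq\mathcal{SC}$ it suffices to check that $\mathcal{SC}$ itself contains the singletons (clear) and is closed under the three operations, since minimality of $\mathcal{S}$ then forces $\mathcal{S}\subseteq\mathcal{SC}$. Closure amounts to: if $I\in\{K,\omega,\omega^*\}$ and each $L_i$ is scattered, then $\sum_{i\in I}L_i$ is scattered. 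I would argue by contradiction. Given a copy $D\cong\mathbb{Q}$ inside the sum, each block-trace $D\cap L_i$ is a convex subset of $D$; an infinite convex subset of $\mathbb{Q}$ is itself densely ordered, so it would be an infinite dense suborder of the scattered order $L_i$, a contradiction. Hence every trace is finite, whence $D$ is a $K$-, $\omega$- or $\omega^*$-indexed sum of finite sets and therefore embeds into $\mathbb{Z}$ (so is scattered), contradicting $D\cong\mathbb{Q}$.

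The substance is the reverse inclusion $\mathcal{SC}\subseteq\mathcal{S}$: every countable scattered order is built from singletons by $K$-, $\omega$-, $\omega^*$-concatenations. The main tool is the \emph{finite condensation} $c_F(L)$, the quotient of $L$ by the convex equivalence ``$x\sim y$ iff the interval between $x$ and $y$ is finite''. Each $\sim$-class is convex and locally finite, hence has order-type among $K,\omega,\omega^*,\zeta$, and each of these lies in $\mathcal{S}$ (noting that $\zeta$ is the $2$-indexed sum of an $\omega^*$- and an $\omega$-sum of singletons). I would iterate $c_F$ transfinitely, setting $c_F^{\alpha+1}=c_F\circ c_F^{\alpha}$ and taking unions of the (increasing) equivalence relations at limit stages, and define the \emph{Hausdorff rank} $\rho(L)$ as the least $\alpha$ with $c_F^{\alpha}(L)$ a singleton.

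The crux is the termination lemma: for countable scattered $L$ the rank $\rho(L)$ exists and is countable. I would prove this in two moves. First, the iterated condensations form a coarsening chain of equivalence relations on the countable set $L$; regarded as an increasing chain of subsets of the countable set $L\times L$, it can strictly increase only countably often, so it stabilizes at some countable ordinal to a fixed point $L^{*}=c_F(L^{*})$. Second, a fixed point with more than one element can have no adjacent pair (an adjacent pair would bound a finite interval and hence be condensed), so $L^{*}$ is densely ordered and, being countable with at least two points, contains a copy of $\mathbb{Q}$; lifting representatives back through the condensations embeds $\mathbb{Q}$ into $L$, contradicting scatteredness. Hence for scattered $L$ the stable quotient is a singleton and $\rho(L)<\omega_1$.

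With rank in hand I would close by transfinite induction on $\rho(L)$, using a \emph{substitution lemma}: if $L'\in\mathcal{S}$ and $B_p\in\mathcal{S}$ for each $p\in L'$, then $\sum_{p\in L'}B_p\in\mathcal{S}$. This is proved by induction on a witnessing construction of $L'$, since replacing the singleton leaves of that construction by the orders $B_p$ only reuses $K$-, $\omega$-, $\omega^*$-sums. For the main induction, write $L=\sum_{p\in c_F(L)}B_p$ with each block $B_p$ of type $K,\omega,\omega^*$ or $\zeta$, hence in $\mathcal{S}$; in the successor case $c_F(L)$ has strictly smaller rank and so lies in $\mathcal{S}$ by the inductive hypothesis, and the substitution lemma yields $L\in\mathcal{S}$. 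The delicate bookkeeping — and the step I expect to be the real obstacle — is the behaviour at limit ranks, where one step of $c_F$ need not drop the rank because $1+\lambda=\lambda$; here I would peel the condensation from the bottom, using $c_F^{1+\alpha}(L)=c_F^{\alpha}(c_F(L))$, so that $\rho(c_F(L))\le\alpha$ whenever $c_F^{1+\alpha}(L)$ is a point, and feed this back into the same induction. Making this ordinal shift rigorous, together with the termination lemma, is where essentially all of the difficulty of Hausdorff's classification resides.
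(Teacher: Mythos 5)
The paper does not prove this statement at all: it is quoted as Hausdorff's classical classification with a citation to Rosenstein (Theorem 5.24), so your proposal can only be measured against the standard proof, which is indeed the route you take (finite condensation, Hausdorff rank, substitution lemma). Most of your sketch is sound: the easy inclusion via convex block-traces is correct; the termination lemma (stabilization of the increasing chain of equivalence relations below $\omega_1$, plus the observation that a nontrivial fixed point of $c_F$ is dense and hence contains a copy of $\mathbb{Q}$ lifting back through representatives of convex classes) is correct; the substitution lemma by structural induction on the generation of $\mathcal{S}$ is correct; and the successor case of the main induction works, since $\rho(L)=\beta+1$ forces $\rho(c_F(L))\le\beta<\rho(L)$ and each block is of type $K$, $\omega$, $\omega^*$ or $\zeta$.

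The genuine gap is exactly the step you flag and then wave at: the limit-rank case. Your proposed device $c_F^{1+\alpha}(L)=c_F^{\alpha}(c_F(L))$ cannot close the induction there, because when $\rho(L)=\lambda$ is a limit the least $\alpha$ with $c_F^{1+\alpha}(L)$ a point is $\lambda$ itself (as $1+\alpha=\lambda$ forces $\alpha=\lambda$), so it yields only $\rho(c_F(L))\le\lambda$, with equality actually occurring: for $L=\omega^{\omega}$ one has $\rho(L)=\omega$ and $c_F(L)\cong\omega^{\omega}$ again. Hence ``feeding this back into the same induction'' is circular, and the decomposition $L=\sum_{p\in c_F(L)}B_p$ is useless at limits. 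The missing idea is to avoid condensing once more and instead exhaust $L$ from inside: fix $x_0\in L$ and a sequence $\beta_n\uparrow\lambda$ cofinal in $\lambda$, and let $B_n$ be the $\sim_F^{\beta_n}$-class of $x_0$. Since the stage-$\lambda$ relation is the union of the earlier ones and $c_F^{\lambda}(L)$ is a point, the $B_n$ are convex, increasing, with $\bigcup_n B_n=L$. One then needs a restriction lemma (for $M$ convex in $N$, the condensation relations of $M$ are the restrictions of those of $N$, proved by induction on the stage), which gives $\rho(B_n)\le\beta_n<\lambda$ and likewise $\rho(L_n^{\pm})\le\beta_{n+1}<\lambda$ for the two convex pieces $L_n^{-},L_n^{+}$ into which $B_{n+1}\setminus B_n$ splits to the left and right of $B_n$. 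Writing $L=\dots+L_1^{-}+L_0^{-}+B_0+L_0^{+}+L_1^{+}+\dots$, i.e.\ an $\omega^*$-indexed sum, then $B_0$, then an $\omega$-indexed sum, all of orders of rank $<\lambda$, the inductive hypothesis together with your substitution lemma finishes the limit case. Without this (or an equivalent reorganization of the induction), the proof as proposed does not go through.
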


Let us introduce the concept of $\alpha$-structure and prove a set-theoretic technical result (Lemma~\ref{lem_structure}) that we will use in the proof of Theorem~\ref{long_t}.
\begin{defi}
Let $M$ be a set of cardinality at least $\aleph_1$.
For $\alpha\ge2$ a countable ordinal, we define recursively on $\alpha$ an $\alpha$-\emph{structure} $A_\alpha$ in $M$ (a family of families nested to depth $\alpha$) and its support $\operatorname{supp}(A_\alpha)$ as:
\begin{itemize}
  \item $\alpha = 2$: $A_2$ is a countably infinite subset of $M$ and $\operatorname{supp}(A_2)=A_2$.
  \item $\alpha =\beta +1$: $A_\alpha = \{B_i \mid i < \omega\}$, where each $B_i$ is a $\beta$-structure such that the $\operatorname{supp}(B_i)$ are pairwise disjoint and we set
  $$\operatorname{supp}(A_\alpha)=\bigcup_{i<\omega}\operatorname{supp}(B_i).$$
   \item $\alpha$ limit ordinal: Let $\{2\le\gamma_i < \alpha \mid i < \omega\}$ be an increasing sequence cofinal in $\alpha$. 
   We define $A_\alpha$ as $A_\alpha = \{B_i \mid i < \omega\}$, where each $B_i$ is a $\gamma_i$-structure such that the $\operatorname{supp}(B_i)$ are pairwise disjoint and we set
   $$
   \operatorname{supp}(A_\alpha)=\bigcup_{i<\omega}\operatorname{supp}(B_i).$$
\end{itemize}
\end{defi}

% The \emph{support} of an $\alpha$-structure is the set of elements of $M$ appearing at the deepest level of nesting (i.e., in the $2$-structures at the bottom of the recursive definition).

\begin{defi}\label{differenza}
    Given an $\alpha$-structure $A$ and a finite set $F$, we define the $\alpha$-structure $A\,\dot-\,F$ inductively over $\alpha$.
    \begin{itemize}
        \item If $\alpha=2$, then $A\,\dot-\,F=\text{supp}(A)\setminus F$;
        \item if $\alpha>2$, let $A=\{B_i\mid i<\omega,\text{ where }B_i\text{ are }\beta_i\text{-structure with }\beta_i<\alpha\}$. Then, we set $$A\,\dot-\,F=\{B_i\,\dot-\,F\mid i<\omega\}.$$
    \end{itemize}
\end{defi}
\begin{rem}
    Given an $\alpha$-structure $A$ and a finite set $F$, the $\alpha$-structure $A\,\dot-\,F$ is well defined, because if $\alpha=2$, supp$(A)\setminus F$ is an infinite set.
\end{rem}

\begin{lem}\label{lem_structure}
Let $M$ be a set with $|M| \geq \aleph_1$. There exists a family
$\{A_\alpha \mid \alpha < \omega_1\}$ of structures, where each $A_\alpha$ is an $\alpha$-structure in $M$ and their supports are pairwise disjoint. 
% satisfying:
% \begin{itemize}
%   \item[(i)] $A_\alpha$ is an $\alpha$-structure,
%   \item[(ii)] at stage $\alpha$, for each $\beta < \alpha$, there exist $\aleph_0$ many $\beta$-structures with pairwise disjoint supports.
% \end{itemize}
\end{lem}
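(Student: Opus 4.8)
The plan is to reduce the whole statement to two elementary facts about $\alpha$-structures: first, that each one occupies only countably many points of $M$, and second, that on \emph{any} countably infinite set one can actually construct a structure of arbitrary prescribed countable rank. Granting these, the lemma becomes a packing problem: we must fit $\aleph_1$ pairwise disjoint countable supports inside $M$, which is possible precisely because $|M|\ge\aleph_1=\aleph_1\cdot\aleph_0$. First I would record the \emph{countable support} observation by induction along the defining recursion. For $\alpha=2$ the support is countably infinite by fiat; in the successor and limit clauses the support is $\bigcup_{i<\omega}\operatorname{supp}(B_i)$, a countable union of (inductively) countable sets, hence countable. So $\operatorname{supp}(A_\alpha)$ is countable for every $2\le\alpha<\omega_1$.

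Next I would isolate the existence sub-lemma: for every countable ordinal $\alpha\ge2$ and every countably infinite $S$, there is an $\alpha$-structure with $\operatorname{supp}\subseteq S$. This I would prove by transfinite induction on $\alpha$. For $\alpha=2$ one simply takes $A_2=S$. For the inductive step, split $S$ into $\omega$ pairwise disjoint countably infinite pieces $S=\bigsqcup_{i<\omega}S_i$, which is always possible for a countably infinite $S$. In the successor case $\alpha=\beta+1$ I apply the induction hypothesis on each $S_i$ to get a $\beta$-structure $B_i$ with $\operatorname{supp}(B_i)\subseteq S_i$; in the limit case I first fix a cofinal sequence $\gamma_i\uparrow\alpha$ and then extract a $\gamma_i$-structure $B_i$ with $\operatorname{supp}(B_i)\subseteq S_i$. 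Either way the supports $\operatorname{supp}(B_i)$ are automatically pairwise disjoint, so $\{B_i\mid i<\omega\}$ is an $\alpha$-structure with support contained in $\bigsqcup_i S_i=S$.

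Finally I would assemble the family. Choose $M'\subseteq M$ with $|M'|=\aleph_1$ and fix a bijection $M'\cong\omega_1\times\omega$; the slices $S_\alpha=\{\alpha\}\times\omega$ then partition $M'$ into $\aleph_1$ pairwise disjoint countably infinite sets. For each $\alpha$ with $2\le\alpha<\omega_1$ I apply the sub-lemma to $S_\alpha$, obtaining an $\alpha$-structure $A_\alpha$ with $\operatorname{supp}(A_\alpha)\subseteq S_\alpha$. Since the $S_\alpha$ are pairwise disjoint, so are the supports, which is exactly the assertion. (Strictly, $\alpha$-structures are defined only for $\alpha\ge2$, so the family is indexed by $\{\alpha\mid 2\le\alpha<\omega_1\}$; this still has cardinality $\aleph_1$ and changes nothing.)

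There is no genuine obstruction here: the statement is a bookkeeping lemma, and its content is entirely carried by the countable-support observation, which guarantees there is enough room in $M$. The only point demanding care is the transfinite recursion in the sub-lemma, and within it the limit clause, where one must commit in advance to a coherent assignment of cofinal $\omega$-sequences to the (countably many below each $\omega_1$, but $\aleph_1$-many overall) limit ordinals; this, together with the repeated selection of partitions and sub-structures, is a routine appeal to the Axiom of Choice.
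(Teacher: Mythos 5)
Your proposal is correct, and it follows the same overall skeleton as the paper's proof --- a transfinite induction on $\alpha$ that builds each structure inside a pre-allocated reservoir, with the reservoirs pairwise disjoint from the outset --- but your key sub-lemma is genuinely sharper. The paper fixes $U\subseteq M$ with $|U|=\aleph_1$, partitions it into $\aleph_1$ pieces $T_\alpha$ \emph{each of cardinality $\aleph_1$}, and carries the induction hypothesis ``for every $V\subseteq M$ with $|V|=\aleph_1$ there is a $\lambda$-structure supported in $V$,'' splitting each $T_\alpha$ into countably many uncountable sub-pieces at the successor and limit steps. You instead prove that an $\alpha$-structure of \emph{any} countable rank fits inside an arbitrary countably infinite set $S$ (splitting $S$ into $\omega$ countably infinite pieces), so your reservoirs can be the countable slices $S_\alpha=\{\alpha\}\times\omega$ of a single bijection $M'\cong\omega_1\times\omega$. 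What this buys you: your sub-lemma identifies the minimal room actually needed, consistent with your (correct, and implicit in the paper) observation that every support is countable, and it makes the cardinal arithmetic $\aleph_1=\aleph_1\cdot\aleph_0$ the visible engine of the packing. What the paper's version buys: by always keeping uncountable room it never needs the countable-support remark at all, at the modest cost of a more wasteful induction hypothesis. Your attention to the choice-theoretic bookkeeping (fixing cofinal sequences at limit ordinals, and the indexing quibble that structures exist only for $\alpha\ge 2$, which the paper also acknowledges in its closing line) is accurate; both proofs use AC in the same routine way.
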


\begin{proof}
Fix a subset $U \subseteq M$ with $|U| = \aleph_1$ and let $U=\bigsqcup_{\alpha<\omega_1}T_\alpha$ be a partition of $U$ in $\aleph_1$ sets of cardinality $\aleph_1$.
We construct the family $\{A_\alpha \mid \alpha < \omega_1\}$ recursively (by transfinite induction on $\alpha$), ensuring each $A_\alpha$ is an $\alpha$-structure with support contained in $T_\alpha$:
\begin{itemize}
% \item $\alpha = 1$: Set $A_1 = T_1$, a countably infinite subset of $S$. Its support is $T_1$.
% \item $\alpha>1$: Choose a sequence $\{\gamma_k < \alpha \mid k < \omega\}$ of ordinals less than $\alpha$ such that $\gamma_1=1$. Partition $T_\alpha = \bigsqcup_{k < \omega} T_{\alpha,k}$, where each $T_{\alpha,k}$ is countably infinite. For each $k < \omega$, construct a $\gamma_k$-structure $C_k$ with support contained in $T_{\alpha,k}$. Set $A_\alpha = \{C_k \mid k < \omega\}$. The supports of the $C_k$ are pairwise disjoint since the $T_{\alpha,k}$ are.
   \item $\alpha = 2$: Set $A_2$ to be a countably infinite subset of $T_2$. Its support is $A_2$.
\end{itemize}
Let now $\alpha>2$ and assume that, for every $\lambda<\alpha$ and for every set $V\subseteq M$ with $|V|=\aleph_1$, we can construct a $\lambda$-structure supported in $V$. 
Let us treat the case $\alpha$ successor ordinal and the case $\alpha$ limit ordinal separately.
\begin{itemize}  
    \item $\alpha = \beta + 1$:  Partition $T_\alpha = \bigsqcup_{i < \omega} T_{\alpha,i}$, where each $T_{\alpha,i}$ is uncountable. 
    For each $i < \omega$, by the induction hypothesis applied to $V = T_{\alpha,i}$, construct a $\beta$-structure $B_i$ with support contained in $T_{\alpha,i}$.
    \\
    %recursively construct a $\beta$-structure $B_i$ with support contained in $T_{\alpha,i}$ \hl{(take $U=T_{\alpha,i}$)}. 
    Set $A_\alpha = \{B_i \mid i < \omega\}$. Since the $T_{\alpha,i}$ are pairwise disjoint, the supports of the $B_i$ are pairwise disjoint.
    
    \item $\alpha$ limit ordinal: Choose an increasing sequence $\{2\le\gamma_i < \alpha \mid i < \omega\}$ of ordinals cofinal in $\alpha$. Partition $T_\alpha = \bigsqcup_{i < \omega} T_{\alpha,i}$, where each $T_{\alpha,i}$ is uncountable. 
    For every $i < \omega$, construct a $\gamma_i$-structure $B_i$ with support contained in $T_{\alpha,i}$, using the induction hypothesis on $V = T_{\alpha,i}$.
    \\
    Set $A_\alpha = \{B_i \mid i < \omega\}$. 
   The supports of the $B_i$ are pairwise disjoint since the $T_{\alpha,i}$ are.

\end{itemize}
The $A_\alpha$ have pairwise disjoint supports because their supports are contained in the disjoint sets $T_\alpha$.
% For condition (ii), at stage $\alpha = \beta + 1$, $A_\alpha = \{B_i \mid i < \omega\}$ provides $\aleph_0$ many pairwise disjoint $\beta$-structures (the $B_i$). For limit $\alpha$, $A_\alpha = \{D_k \mid k < \omega\}$ includes $\gamma_k$-structures for various $2\le\gamma_k < \alpha$, and since we can choose $\gamma_k$ to cover any $\beta < \alpha$ over countably many $k$, we ensure $\aleph_0$ many $\beta$-structures for each $\beta < \alpha$ by considering subfamilies of previous $A_{\alpha'}$ for $\alpha' < \alpha$, whose supports are disjoint from $T_\alpha$.

Thus, the family $\{A_\alpha \mid 2\le\alpha < \omega_1\}$ satisfies the claim.

\end{proof}

%We want to use these two theorems to prove the following result:
%\begin{thm}
    %If $(X,f)$ is a compact dynamical system and $f$ is a transitive homeomorphism, then, for every $x,y\in X$, the set $\Omega(x,y)$ contains every countable linear ordering.
%\end{thm}

\noindent We are now ready to prove our main result. 
The proof proceeds in two main stages. 
First, for a generic pair $(x,y)\in X^2$ with disjoint two–sided orbits, transitivity and the density of forward and backward orbits allow us to construct order-compatible nested families of $\varepsilon_n$-chains that realize the basic building blocks of countably infinite scattered orders, namely order-types of the form $\omega+k$, $h+\omega^*$, and $h+\zeta+k$. 
This is done in Lemma \ref{lem_partial}.
Then, in the main Theorem, we organize orbit classes into suitable ``$\alpha$-structures'' indexed by countable ordinals $\alpha<\omega_1$, and use these to carry out a transfinite induction on the Hausdorff rank: at stage $\alpha$ we assign disjoint orbit classes to the scattered orders of rank$_{VD}\le\alpha$ and concatenate the corresponding chains along sums indexed by $K$, $\omega$, $\omega^*$ or $\zeta$, thereby realizing arbitrary countable scattered order-types in $\Omega(x,y)$ for comeagre many pairs $(x,y)$.

\begin{lem}\label{lem_partial}
    Let $(X,f)$ be a compact dynamical system such that $X$ has no isolated points, $|X|=\mathfrak{c}$, and $f$ is a transitive homeomorphism of $X$. Let $\mathcal{I}=\{x\in X\,|\,\overline{\mathcal{O}^-(x)}=\overline{\mathcal{O}^+(x)}=X\}$ be the set of points with both a dense backward orbit and a dense forward orbit. Set
    \begin{equation*}\label{gamma}
        \Gamma:=\left\{\omega+k,\,h+\omega^*,\,h+\zeta+k\,|\,k,h\in\mathbb N_0\right\}.
    \end{equation*}
    If $x,y\in \cal I$ are such that $\mathcal{O}(x)\cap \mathcal{O}(y)=\varnothing$,
    then
    $$
    \Gamma\subseteq\Omega(x,y). 
    $$ 
    More precisely, for every $\xi\in \Gamma$, there exists a sequence of order-compatible nested chains $\{C_n\}_{n\in\mathbb{N}}$ such that, setting $C=\big(\bigcup_n\widehat{C_n}\big)\setminus\{x,y\}$, the order-type of $(C,\le_\infty)$ is $\xi$.
\end{lem}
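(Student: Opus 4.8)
The plan is to realize each target type $\xi\in\Gamma$ by an explicit order-compatible nested sequence $\{C_n\}_{n\in\mathbb N}$ of acyclic $\varepsilon_n$-chains from $x$ to $y$, assembled entirely from orbit points. Three features drive every construction: since $x,y\in\mathcal I$, the orbits $\mathcal O^+(x),\mathcal O^-(x),\mathcal O^+(y),\mathcal O^-(y)$ are all dense in $X$; consecutive iterates cost nothing, i.e. $d\bigl(f(f^i(z)),f^{i+1}(z)\bigr)=0$; and density lets one splice the tail of one orbit segment onto the head of another with an arbitrarily small jump. I will also use that $\mathcal I$ is comeagre (Remark~\ref{rem_trans}), hence uncountable, whereas $\mathcal O(x)\cup\mathcal O(y)$ is countable.

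\emph{The two one-sided families.} For $\omega+k$ I take
\[
C_n:\; x,\,f(x),\dots,f^{a_n}(x),\,f^{-k}(y),\dots,f^{-1}(y),\,y,
\]
with the $k$-point tail fixed and $a_n\uparrow\infty$ chosen, by density of $\mathcal O^+(x)$, so that $f^{a_n+1}(x)$ is $\varepsilon_n$-close to the fixed point $f^{-k}(y)$; all internal transitions $f^i(x)\to f^{i+1}(x)$ and $f^{-j}(y)\to f^{-j+1}(y)$ are free. In the limit the forward iterates of $x$ form an increasing copy of $\omega$ below the fixed tail of length $k$, giving $\operatorname{otp}(C,\le_\infty)=\omega+k$. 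The type $h+\omega^*$ is the mirror image: fix the head $f(x),\dots,f^h(x)$ and let the backward orbit of $y$ grow, choosing $b_n\uparrow\infty$ by density of $\mathcal O^-(y)$ with $f^{-b_n}(y)$ $\varepsilon_n$-close to the fixed point $f^{h+1}(x)$; the $y$-iterates accumulate as $\omega^*$ above the fixed head, giving $h+\omega^*$.

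\emph{The two-sided type and the main obstacle.} The delicate case is $h+\zeta+k$, since $\zeta=\omega^*+\omega$ demands a genuinely two-sided block: a window of iterates straddling a center. Along any single two-sided orbit all consecutive steps are free, so such a window realizes $\zeta$ in the limit \emph{provided its center iterate may be used as an interior point}. The orbits of $x$ and $y$ cannot supply this, because their only centers are $x,y$ themselves, which are reserved as the chain's endpoints; deleting the center would force the turning-point transition $f^{-1}(z)\to f^{1}(z)$ to carry the non-small cost $d(f(f^{-1}(z)),f(z))=d(z,f(z))$. I therefore introduce an auxiliary point $p\in\mathcal I$ with $\mathcal O(p)\cap(\mathcal O(x)\cup\mathcal O(y))=\varnothing$, available since $\mathcal I$ is uncountable, $\mathcal O(x)\cup\mathcal O(y)$ is countable, and distinct orbits are disjoint. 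Routing the block along the two-sided orbit of $p$, centered at $p$ itself, I set
\[
C_n:\; x,\,f(x),\dots,f^{h}(x),\;f^{-B_n}(p),\dots,p,\dots,f^{A_n}(p),\;f^{-k}(y),\dots,f^{-1}(y),\,y,
\]
with $B_n,A_n\uparrow\infty$, entering the block by $f^{h+1}(x)\approx f^{-B_n}(p)$ (density of $\mathcal O^-(p)$) and leaving it by $f^{A_n+1}(p)\approx f^{-k}(y)$ (density of $\mathcal O^+(p)$). The fixed head and tail contribute $h$ below and $k$ above the order-$\zeta$ block, yielding $h+\zeta+k$.

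\emph{Verification.} In each case it remains to check the defining properties of an order-compatible nested sequence, all routine by construction: every $C_n$ is a genuine $\varepsilon_n$-chain (free internal steps together with the density-jumps arranged to have size $<\varepsilon_n$); supports are nested because points are only ever appended at the prescribed ends; the sequence is order-compatible because retained points keep their relative order, which is always ordering by exponent within each orbit; and each $C_n$ is acyclic because the orbits $\mathcal O(x),\mathcal O(p),\mathcal O(y)$ are pairwise disjoint and aperiodic, so all listed points are distinct and differ from $x,y$. One then reads off $\operatorname{otp}(C,\le_\infty)=\xi$ from the limit positions. The only real difficulty is the turning-point obstruction for $\zeta$, met by the auxiliary two-sided dense orbit; the degenerate cases $h=0$ or $k=0$ are handled by the same chains with the corresponding density-jump made directly to or from $x$ or $y$.
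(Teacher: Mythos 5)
Your proposal is correct and follows essentially the same route as the paper: the same three explicit chain families (growing forward orbit of $x$ for $\omega+k$, growing backward orbit of $y$ for $h+\omega^*$, and a two-sided block along an auxiliary orbit $\mathcal{O}(z)$ — your $p$ — spliced in by density for $h+\zeta+k$), with the auxiliary point obtained, as in the paper, from comeagreness of $\mathcal{I}$ versus countability of $\mathcal{O}(x)\cup\mathcal{O}(y)$. Your explicit identification of the ``turning-point obstruction'' forcing the auxiliary orbit in the $\zeta$ case is a nice articulation of what the paper leaves implicit, but the construction and verification are the same.
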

\begin{proof}
    % We will show that $\Gamma\subseteq\Omega(x,y),$
    % which means that, for every $\xi\in \Gamma$, we can define a sequence of ordinately nested chains $\{C_n\}_{n\in\mathbb{N}}$ from $x$ to $y$ such that the ordered limit set $(C\setminus\{x,y\},\le_\infty)$ has order-type $\xi$, where $C=\bigcup_n\widehat{C_n}$.
    Fix a sequence of positive real numbers $\{\varepsilon_n\}_{n\in\mathbb N}$ that converges to zero monotonically.    
    Let us distinguish three different cases.
    \begin{enumerate}
        \item Let $\xi=\omega+k$, for some $k\in\mathbb N_0$. 
        Since $x\in\mathcal{I}$, it is possible to find a strictly increasing sequence of natural numbers $\{h_n\}_{n\in\mathbb{N}}$ such that
        \begin{equation}\label{dist_1}
            d(f^{h_n+1}(x),f^{-k}(y))<\varepsilon_n\qquad \forall n\in\mathbb{N}.
        \end{equation}
        For every $n\in\mathbb{N},$ we set 
        $$
        C_n:\,x,\,f(x),\dots,\,f^{h_n-1}(x),\,f^{h_n}(x),\,f^{-k}(y),\,f^{-k+1}(y),\dots,\,f^{-1}(y),\,y.
        $$
        By \eqref{dist_1}, $C_n$ is an $\varepsilon_n$-chain and, by construction, $\widehat{C_n}\subseteq\widehat{C_{n+1}}$ for every $n\in\mathbb{N}$. 
        Since $\mathcal{O}(x)\cap\mathcal{O}(y)=\varnothing$, it follows that $\{C_n\}_{n\in\mathbb{N}}$ is  a sequence of nested and acyclic chains from $x$ to $y$. Moreover, since, by construction, the relative order in which two points appear in the chain $C_n$ does not change in the chain $C_{n+1}$, the sequence $\{C_n\}_{n\in\mathbb N}$ is a sequence of order-compatible nested chains.
        Set $C=\big(\bigcup_n\widehat{C_n}\big)\setminus\{x,y\}$ and notice that 
        $$
        C=\mathcal{O}^+(x)\cup \{f^{-k}(y),f^{-k+1}(y),\ldots,f^{-1}(y)\}.
        $$
        % and enumerating $\omega+k$ according to its inner ordering as
        % $$
        % \omega+k=\{a_\ell\}_{\ell\in\mathbb N}\cup\{c_1,\,c_2,\dots,c_k\}=\{a_1,\,a_2,\dots,\,a_n,\dots,\,c_1,\,c_2,\dots,\,c_k\},
        % $$
        % the application $\varphi:(C\setminus\{x,y\},\le_\infty)\longrightarrow\omega+k$ given by
        % \begin{equation*}
        % \varphi(u)=
        %     \begin{cases}
        %         a_\ell &\text{if }u=f^\ell(x)\text{ for some }\ell\in\mathbb N\\
        %         c_{k-\ell+1}&\text{if }u=f^{-\ell}(x)\text{ for some }\ell\in\{1,2,\dots,k\}
        %     \end{cases}
        % \end{equation*}
        Therefore, the application $\varphi:(C,\le_\infty)\longrightarrow\omega+k$ given by
        \begin{equation*}
        \varphi(u)=
            \begin{cases}
                \ell &\text{if }u=f^\ell(x)\text{ for some }\ell\in\mathbb N\\
                \omega+k-s &\text{if }u=f^{-s}(y)\text{ for some } s\in\{1,2,\dots,k\}
            \end{cases}
        \end{equation*}
        is an order-isomorphism between $(C,\le_\infty)$ and $\omega+k$ and so the order-type of $(C,\le_\infty)$ is $\xi$.

        \item Let $\xi=h+\omega^*$ for some $h\in\mathbb N_0$. 
        Since $\mathcal{O}^-(y)$ is dense in $X$, there exists a strictly increasing sequence of natural numbers $\{k_n\}_{n\in\mathbb{N}}$ such that
        \begin{equation}\label{dist_2}
            d(f^{h+1}(x),f^{-k_n}(y))<\varepsilon_n\qquad \forall n\in\mathbb{N}.
        \end{equation}
        For every $n\in\mathbb{N},$ we set
        $$
        C_n:\,x,\,f(x),\dots,\,f^{h-1}(x),\,f^h(x),\,f^{-k_n}(y),\,f^{-k_n+1}(y),\dots,\,f^{-1}(y),\,y.
        $$
        By \eqref{dist_2}, $C_n$ is an $\varepsilon_n$-chain and, by construction, $\widehat{C_n}\subseteq\widehat{C_{n+1}}$ for every $n\in\mathbb{N}$.
        Since $\mathcal{O}(x)\cap\mathcal{O}(y)=\varnothing$, we have that $\{C_n\}_{n\in\mathbb{N}}$ is a sequence of nested and acyclic chains from $x$ to $y$.  Moreover, since, by construction, the relative order in which two points appear in the chain $C_n$ does not change in the chain $C_{n+1}$, the sequence $\{C_n\}_{n\in\mathbb N}$ is a sequence of order-compatible nested chains.
        Set $C=\big(\bigcup_n\widehat{C_n} \big)\setminus\{x,y\}$ and notice that 
        $$
        C=\{f(x),\ldots,f^h(x)\}\cup \mathcal{O}^-(y).
        $$
        Enumerating $h+\omega^*$ according to its inner order as
         $$
         h+\omega^*=\{a_1,\,a_2,\dots,\,a_h\}\cup\{c_{-\ell}\}_{\ell\in\mathbb N}=\{a_1,\,a_2,\dots,\,a_h,\dots,c_{-\ell},\dots,\,c_{-2},\,c_{-1}\},
         $$
        we find that the application $\varphi:(C,\le_\infty)\longrightarrow h+\omega^*$ given by
        \begin{equation*}
        \varphi(u)=
            \begin{cases}
                a_\ell &\text{if }u=f^\ell(x)\text{ for some }\ell\in \{1,2,\dots,h\}\\
                c_{-\ell}&\text{if }u=f^{-\ell}(y)\text{ for some }\ell\in \mathbb{N}
            \end{cases}
        \end{equation*}        
         is an order-isomorphism between $(C,\le_\infty)$ and $h+\omega^*$ and so the order-type of $(C,\le_\infty)$ is $\xi$.

        \item Let $\xi=h+\zeta+k$, for some $h,k\in\mathbb N_0$. 
        Since, by Remark~\ref{rem_trans}, $\mathcal{I}$ is a comeagre set, there exists $z\in\mathcal{I}$ such that $z\notin\mathcal{O}(x)\cap\mathcal{O}(y)$.
        Since $f$ is a homeomorphism, it follows that $\mathcal{O}(z)\cap\mathcal{O}(x)=\mathcal{O}(z)\cap\mathcal{O}(y)=\varnothing$. Then, it is possible to consider two strictly increasing sequences of natural numbers $\{k_n\}_{n\in\mathbb{N}}$ and $\{h_n\}_{n\in\mathbb{N}}$ such that, for every $n\in\mathbb{N}$,
        \begin{equation}\label{dist_3}
        d(f^{h+1}(x),f^{-k_n}(z))<\varepsilon_n\quad \text{and} \quad d(f^{h_n+1}(z),f^{-k}(y))<\varepsilon_n.
         \end{equation}
         
        For every $n\in\mathbb{N},$ we set
        \[
            \begin{aligned}
                C_n:\,& x,\,f(x),\,\dots,\,f^{h-1}(x),\,f^h(x),\,f^{-k_n}(z),\,f^{-k_n+1}(z),\,\dots,\,f^{-1}(z),\\
                &z,\,f(z),\,\dots,\,f^{h_n-1}(z),\,f^{h_n}(z),\,f^{-k}(y),\,f^{-k+1}(y),\,\dots,\,f^{-1}(y),\,y.
            \end{aligned}
        \]
        By \eqref{dist_3}, $C_n$ is an $\varepsilon_n$-chain and, by construction, $\widehat{C_n}\subseteq\widehat{C_{n+1}}$ for every $n\in\mathbb{N}$.
        Since $\mathcal{O}(x)\cap\mathcal{O}(y)=\varnothing$, we have that $\{C_n\}_{n\in\mathbb{N}}$ is a sequence of nested and acyclic chains from $x$ to $y$.  Moreover, since, by construction, the relative order in which two points appear in the chain $C_n$ does not change in the chain $C_{n+1}$, the sequence $\{C_n\}_{n\in\mathbb N}$ is a sequence of order-compatible nested chains.
        Set $C=\big(\bigcup_n\widehat{C_n}\big) \setminus\{x,y\}$ and notice that 
        $$
        C=\{f(x),\ldots,f^h(x)\}\cup \mathcal{O}(z)\cup \{f^{-k}(y),\ldots,f^{-1}(y)\}.
        $$
        Enumerating $h+\zeta+k$ according to its inner order as
        \begin{align*}
            h+\zeta+k&=\{a_1,\,a_2,\dots,\,a_h\}\cup\{b_\ell\}_{\ell\in\mathbb Z}\cup\{c_1,\,c_2,\dots,c_k\}=\\
            &=\{a_1,\,a_2,\dots,\,a_h,\dots,b_{-\ell},\dots,\,b_{-1},\,b_0,\,b_1,\dots,\,b_\ell,\dots,\,c_1,\,c_2,\dots,c_k\},
        \end{align*}
        we find that the application $\varphi:(C,\le_\infty)\longrightarrow h+\zeta+k$ given by
        \begin{equation*}
        \varphi(u)=
            \begin{cases}
                a_\ell &\text{if }u=f^\ell(x)\text{ for some }\ell\in \{1,2,\dots,h\}\\
                b_\ell &\text{if }u=f^{\ell}(z) \text{ for some } \ell\in \mathbb{Z}\\
                c_{k-\ell+1}&\text{if }u=f^{-\ell}(y)\text{ for some }\ell\in \{1,2,\dots,k\}
            \end{cases}
        \end{equation*}
is an order-isomorphism between $(C,\le_\infty)$ and $h+\zeta+k$ and so the order-type of $(C,\le_\infty)$ is $\xi$.

\end{enumerate}

\end{proof}

\begin{thm}\label{long_t}
    Let $(X,f)$ be a compact dynamical system such that $X$ has no isolated points and $|X|=\mathfrak{c}$. 
    If $f$ is a transitive homeomorphism, then every countable scattered order and the order-type of the rationals $\eta$ appear in $\Omega(X^2)$.
    
\vspace{0.3cm}

    More precisely:
    \begin{itemize}
        \item[1.]There exists a comeagre set $\mathcal{M}\subseteq X^2$ such that, for every $(x,y)\in \mathcal{M}$, the family of orders $\Omega(x,y)$ contains every countably infinite scattered order;
        \item[2.] $\Omega(x,y)$ contains the finite ordinal $K$ if and only if $y\in\mathcal{O}^+(x)$;
        \item[3.]$\Omega(x,y)$ contains the order-type $\eta$ for every $x,y\in X$. 
    \end{itemize}
    %$$\Omega(x,y)\supseteq\left\{\omega+k,\,h+\omega^*,\,l+\zeta+m\,|\,k,h,l,m\in\mathbb N_0\right\}.$$
    %Moreover, $\Omega(x,y)$ contains the sums indexed by $\omega,\,\omega^*,\,\zeta$ and $k$.
\end{thm}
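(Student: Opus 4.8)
The plan is to treat the three assertions separately, since (2) and (3) are immediate and (1) carries all the weight. Assertion (2) is precisely Theorem~\ref{finite-ordinal}. Assertion (3) follows from Theorem~\ref{lemma_transitiva}, because $|X|=\mathfrak c$ forces $X$ infinite, so $\eta\in\Omega(x,y)$ for every pair. For (1) I would first pin down the comeagre set. Let $\mathcal I$ be as in Remark~\ref{rem_trans}, so $\mathcal I$ is comeagre and hence $\mathcal I\times\mathcal I$ is comeagre in $X^2$. For each $k\in\mathbb Z$ the set $\operatorname{graph}(f^k)$ is closed ($f^k$ is continuous on a compact metric space) and has empty interior (a box $U\times V$ inside it would force $f^k$ to be constant on $U$ with value covering the open set $V$, impossible since $X$ has no isolated points), so $\bigcup_{k\in\mathbb Z}\operatorname{graph}(f^k)$ is meagre. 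I then set
\[
\mathcal M \;=\; (\mathcal I\times\mathcal I)\setminus\bigcup_{k\in\mathbb Z}\operatorname{graph}(f^k),
\]
which is comeagre. For $(x,y)\in\mathcal M$ we have $x,y\in\mathcal I$ and $y\notin\mathcal O(x)$; the latter gives $\mathcal O(x)\cap\mathcal O(y)=\varnothing$ (if $f^m(x)=f^n(y)$ then $y=f^{m-n}(x)$). Thus every pair in $\mathcal M$ meets the hypotheses of Lemma~\ref{lem_partial}.

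Next I would assemble the inductive material. Fixing $(x,y)\in\mathcal M$, the comeagre set $\mathcal I$ is uncountable, and since orbits are countable there are uncountably many distinct orbits inside $\mathcal I\setminus(\mathcal O(x)\cup\mathcal O(y))$; picking one representative per orbit gives a set $M$ with $|M|\ge\aleph_1$. Applying Lemma~\ref{lem_structure} to $M$ produces, for every $\alpha<\omega_1$, an $\alpha$-structure $A_\alpha$ with pairwise disjoint supports, whose points are representatives of pairwise disjoint orbits lying in $\mathcal I$. These structures supply, for each node of a Hausdorff decomposition tree, a private reservoir of mutually disjoint orbits: disjointness is what keeps every chain acyclic and every gluing legitimate, while membership in $\mathcal I$ guarantees that the forward and backward orbit of each auxiliary point is dense, so that the $\varepsilon_n$-small transitions between consecutive blocks always exist (exactly as the single transitions are produced in the proof of Lemma~\ref{lem_partial}).

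The heart of the argument is a transfinite induction on the $VD$-rank, realizing every countably infinite scattered $L$ as $\operatorname{otp}(C,\le_\infty)$ for an order-compatible nested sequence from $x$ to $y$. The base is Lemma~\ref{lem_partial}: a single orbit $\mathcal O(z)\subseteq\mathcal I$ realizes $\zeta$ (its points, indexed by $\mathbb Z$, are the $\zeta$-many singletons), and the lemma already delivers $\omega+k$, $h+\omega^*$ and $h+\zeta+k$, covering the infinite $VD_1$ building blocks with their finite end-adjustments. For the inductive step I would invoke Hausdorff's classification to write $L=\sum_{i\in I}L_i$ with $I$ of type $K$, $\omega$, $\omega^*$ or $\zeta$ and each $L_i$ of strictly smaller rank, realize each $L_i$ by the induction hypothesis inside its own sub-reservoir drawn from a distinct sub-structure of $A_\alpha$, and concatenate the block realizations in the order dictated by $I$, inserting at each junction an $\varepsilon_n$-jump obtained from density of the relevant orbit in $\mathcal I$. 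A finite sum $I=K$ is a direct concatenation; the types $\omega$ and $\omega^*$ are handled by the one-sided accumulation mechanisms already visible in Lemma~\ref{lem_partial} (the lengthening forward orbit of the left endpoint for $\omega$, the deepening backward orbit of $y$ for $\omega^*$); and $\zeta=\omega^*+\omega$ is their concatenation.

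The main obstacle is the diagonalization forced by the infinite index types. At stage $n$ the finite chain $C_n$ holds only finitely many blocks, each realized only to finite precision, so I must grow the block-count to infinity \emph{and} refine every block toward its limiting order-type \emph{simultaneously}, while keeping all four requirements alive at once: nested supports $\widehat{C_n}\subseteq\widehat{C_{n+1}}$; order-compatibility (the relative position of two points already present is never later reversed); each $C_n$ a genuine acyclic $\varepsilon_n$-chain; and the emergent order on $\bigl(\bigcup_n\widehat{C_n}\bigr)\setminus\{x,y\}$ equal to $L$. The delicate bookkeeping is to let the per-block precision index and the block-count index increase together via a single diagonal enumeration of the per-block inductive sequences, and to verify that a concatenation of order-compatible nested sequences with disjoint supports is again order-compatible; here the disjointness of the reservoirs from Lemma~\ref{lem_structure} is exactly what lets acyclicity and order-stability survive each gluing. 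Carrying this through for all $\alpha<\omega_1$ places every countably infinite scattered order in $\Omega(x,y)$ for each $(x,y)\in\mathcal M$, which establishes (1).
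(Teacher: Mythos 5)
Your dispatch of parts 2 and 3 via Theorems~\ref{finite-ordinal} and~\ref{lemma_transitiva}, your set $\mathcal M$ (the paper uses the same set $\{(x,y)\in\mathcal I^2\mid \mathcal O(x)\cap\mathcal O(y)=\varnothing\}$, citing \cite{DFR}; your direct nowhere-dense-graphs argument is a fine substitute), the reservoir of pairwise disjoint orbits obtained from Lemma~\ref{lem_structure} applied to orbit classes, and the diagonal scheme in which block count and per-block precision grow together all match the paper's proof. The genuine gap is the sentence where you derive acyclicity and order-compatibility of the concatenation from blocks having ``disjoint supports'' thanks to the disjointness of the reservoirs. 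Consecutive blocks cannot have supports in disjoint orbit classes, and no choice of reservoirs prevents this: in any nested sequence of chains, a point that occupies the same finite position from an endpoint in every sufficiently late chain is forced to be an exact $f$-iterate of that endpoint (if $w,w'$ are eventually consecutive in every $C_n$, then $d(f(w),w')<\varepsilon_n$ for all large $n$, whence $w'=f(w)$). Hence the block realizing $h_\ell+\xi_\ell+k_\ell$ between the junction points $z_\ell$ and $z_{\ell+1}$ necessarily contains the backward iterates $f^{-k_\ell}(z_{\ell+1}),\dots,f^{-1}(z_{\ell+1})$, while the next block contains $z_{\ell+1},f(z_{\ell+1}),\dots$; the two blocks share the class $[z_{\ell+1}]$ by construction. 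With the induction hypothesis as you state it (supports in the sub-reservoir together with the endpoint classes, nothing more), block $\ell$ may use arbitrary points of $[z_{\ell+1}]$ in its interior, say $f^{5}(z_{\ell+1})$, and block $\ell+1$ may use that same point, so the concatenated $C_n$ can acquire cyclic sub-chains and the relative order of a repeated point can flip between stages: acyclicity and order-stability do not survive the gluing automatically.

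This is exactly why the paper strengthens the transfinite induction with two extra invariants, its conditions C) and D): in every chain, the points lying in the class of the initial point form a consecutive initial forward-orbit segment $p,f(p),\dots$, and the points lying in the class of the terminal point form a consecutive final backward-orbit segment $\dots,f^{-1}(q),q$. These force block $\ell$ to meet $[z_{\ell+1}]$ only in strictly negative iterates and block $\ell+1$ only in non-negative ones, which is what actually yields disjoint supports (the paper's equation \eqref{disgiunti}) and hence acyclicity and order-compatibility; your induction needs this strengthening to close. A second, smaller omission at the inductive step: in $L=\sum_{i\in I}L_i$ some summands may be finite, and by Theorem~\ref{finite-ordinal} a finite order is never realizable between points with disjoint orbits, so ``realize each $L_i$ by the induction hypothesis'' fails for those summands. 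The paper first rewrites the sum in the normal form $\sum_{\ell}h_\ell+\xi_\ell+k_\ell$ with each $\xi_\ell$ infinite, absorbing finite summands into their infinite neighbours --- the very reason Lemma~\ref{lem_partial} supplies $\omega+k$, $h+\omega^*$, $h+\zeta+k$ rather than bare $\omega$, $\omega^*$, $\zeta$. You invoke these padded forms at the base but drop the regrouping in the inductive step, where it is equally indispensable.
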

\begin{proof}
The point 2. and the point 3. are satisfied by, respectively, Theorem \ref{finite-ordinal} and Theorem \ref{lemma_transitiva}. Now we prove the first point.
    Let 
    $$
    \mathcal{I}=\{x\in X\,|\,\overline{\mathcal{O}^-(x)}=\overline{\mathcal{O}^+(x)}=X\}
    $$ 
    be the set of points with both a dense backward orbit and a dense forward orbit. It is known that there is a comeagre set
    $$
    \mathcal{M}=\{(x,y)\in\mathcal{I}^2\,|\,\mathcal{O}(x)\cap\mathcal{O}(y)=\varnothing\}
    $$
    (see, for instance, \cite{DFR}).
    We can define the following equivalence relation over $\mathcal{I}$: for every $x,y\in\mathcal{I}$, we say $x\sim y$ if and only if $\mathcal{O}(x)\cap\mathcal{O}(y)\neq\varnothing$. Since $f$ is a bijection, it is not hard to see that $\sim$ is an equivalence relation and that the quotient set $\mathcal{I}_{/_\sim}$ has cardinality $\mathfrak{c}$. We denote by $[x]$ the equivalence class of $x\in\mathcal{I}$. 
    By Lemma \ref{lem_structure}, there exists a family of $\alpha$-structures ($\alpha<\omega_1$) in $\mathcal{I}_{/_\sim}$ with pairwise disjoint supports.

    Now we proceed by transfinite induction on the rank of scattered orders appearing in $\Omega(X^2)$.
    More precisely, we prove, by transfinite induction on the countable ordinal $\alpha\ge 2$, that, for every $(p,q)\in\mathcal{M}$, given an arbitrary $\alpha$-structure $A$ in $\mathcal{I}_{/_\sim}$, for any countably infinite scattered order $\xi$ of $VD$-rank $\beta\le\alpha$, we can find a family of ordinately nested chains $\{C_n\}_{n\in\mathbb{N}}$ from $p$ to $q$ such that, setting $C=\big(\bigcup_n \widehat{C_n}\big) \setminus\{p,q\}$ and $C_n: \,x_0=p,\,x_1,\dots,\,x_{m-1},\,x_m=q$,
    \begin{itemize}
        \item[A)] the ordered limit set $(C,\le_\infty)$ has order-type $\xi$;
        \item[B)] $C$ consists of points $z$ with classes $[z]$ in $\operatorname{supp}(A)\cup\{[p],\,[q]\}$;
        \item[C)] if $x_j\in[p]$ for some $j<m$, then $x_{h+1}=f(x_h)\in [p]$ for every $0\le h< j$;
        \item[D)] if $x_j\in[q]$ for some $j<m$, then $x_h=f(x_{h-1})\in [q]$ for every $j<h\le m$.
    \end{itemize}

    \medskip\textbf{STEP 1.}
    Let $\alpha=2$, fix a $2$-structure $A$ in $\mathcal{I}_{/\sim}$, that is a countably infinite subset of $\mathcal{I}_{/_\sim}$, take $(p,q)\in\mathcal{M}$ and let $\xi$ be a countably infinite scattered order with rank$_{VD}(\xi)\le2$. 
    % We must show that, for every infinite countable scattered ordering $\xi$ of $VD$-rank 0, 1 or 2, it is always possible to build this order-type with the elements whose classes are in $A\cup\{[x],\,[y]\}$.
    If rank$_{VD}(\xi)=0$, then $\xi\in VD_0=\{0,1\}$, and so we have nothing to prove because $\xi$ is finite.
    If rank$_{VD}(\xi)=1$, then $\xi$ is of the form $K,\,\omega,\,\omega^*$ or $\zeta$. 
    In particular, we do not have to consider the case $\xi=K$ because it is a finite order. 
    In the other cases, by using the same construction as in the proof of Lemma \ref{lem_partial}, with $h,k=0$, we have $\{\omega,\,\omega^*,\,\zeta\}\subseteq \Omega(p,q)$. 
    Moreover, if $\xi=\omega$ or $\xi=\omega^*$, then the set $C$ consists of points in $[p]$ or $[q]$, respectively, while if $\xi=\zeta$, then the set $C$ also consists of points in $[z]$, where $[z] \in \text{supp}(A\,\dot-\,\{[p],[q]\})$. In this case, the construction found in the proof of Lemma \ref{lem_partial} assures us that conditions A), B), C), and D) hold.
    
   Now, let us consider the case where rank$_{VD}(\xi)=2$, which means that 
   $$
   \xi=\sum_{\ell\in I} \xi_\ell
   $$ 
   where $I\in \{\omega,\omega^*,\zeta,K\}$ and rank$_{VD}(\xi_\ell)<2$ for every $\ell\in I $. 
   We can equivalently write $\xi$ as 
   $$
   \xi=\sum_{\ell \in I'} h_\ell+\xi_\ell+k_\ell=\sum_{\ell \in I'} \xi'_\ell 
   $$ 
   where $\xi_\ell\in\{\omega, \omega^*, \zeta\}$, $\xi'_\ell=h_\ell +\xi_\ell+k_\ell$ with $h_\ell,k_\ell \in \mathbb{N}_0$ for every $\ell\in I'$, and $I'\in \{\omega,\omega^*,\zeta,K'\}$.
    
    For simplicity, we consider only the case of the sum indexed by $\omega^*$: the case of the sum indexed by $\omega$ can be proved with a specular argument, the case of the finite sum can be studied with minor changes, and the case of the sum indexed by $\zeta$ can be obtained as the sum of two sums, the former indexed by $\omega^*$ and the latter indexed by $\omega$.
    
    With an abuse of notation, we write $\omega^*=\{\dots,\,-n,\dots,\,-2,\,-1\}$. 
    We first show how to construct special sequences of order-compatible nested chains, denoted by $\{S_n^\ell\}_{n\in\mathbb{N}}$, that will be used in the following to define a sequence of order-compatible nested $1/n$-chains $\{C_n\}_{n\in\mathbb{N}}$ such that, setting 
    $$
    C=\big(\bigcup_n\widehat{C_n}\big) \setminus\{x,y\},
    $$ 
    the ordered limit set $(C,\le_\infty)$ has order-type
    $$
    \operatorname{otp}(C,\le_\infty)=\sum_{\ell\in\omega^*}h_\ell+\xi_{\ell}+k_\ell=\sum_{\ell\in\omega^*}\xi_\ell',
    $$
    where, for every $\ell\in\omega^*$, $\xi_\ell\in\{\omega,\,\omega^*,\,\zeta\}$ and $\xi_\ell'=h_\ell+\xi_\ell+k_\ell$ with $h_\ell,k_\ell\in\mathbb N_0$. 
    Note that, if $\xi_\ell$ is $\omega$ (or $\omega^*$), then we can assume $h_\ell$ (respectively $k_\ell$) to be 0.
    % By the fact that $f$ is uniformly continuous, for every $n\in \mathbb N$, there exists a real number $0<\delta_n<\frac{1}{2n}$ such that $d(f(z),f(w))<\frac{1}{2n}$, whenever $d(z,w)<\delta_n$. 

    Let $B_1\sqcup B_2$ be a partition of supp$(A\,\dot-\,\{[p],\,[q]\})$ such that both $B_1$ and $B_2$ are infinite.
    Up to re-numbering the classes in $B_1$ and in $B_2$, we can assume
    $$
    B_1=\{[v_{-1}],\,[v_{-2}],\dots,\,[v_{\ell}],\dots\},
    $$
    and
    $$
    B_2=\{[w_{-1}],\,[w_{-2}],\dots,\,[w_{\ell}],\dots\}.
    $$
    For every $\ell\in\omega^*=\{\dots,\,-n,\dots,\,-2,\,-1\}$, there exists a point $z_{\ell}\in[v_{\ell}]=\mathcal{O}(v_{\ell})$ such that
    \begin{equation}\label{eq_z_n}
        d(f(p),z_{\ell})<-\frac{1}{2\ell}.
    \end{equation}    
    
    Set $z_0=q$.
    For every $\ell \in\mathbb \omega^*$, let us now define a sequence of order-compatible nested $\frac{1}{2n}$-chains $\{S^\ell_n\}_{n\in\mathbb{N}}$.

    Since $(f^{-1}(z_\ell),z_{\ell+1})\in \mathcal{M}$, performing the same construction as in the proof of Lemma \ref{lem_partial} with $\{\varepsilon_n\}_{n\in\mathbb N}=\{\frac{1}{2n}\}_{n\in\mathbb N}$, we obtain a sequence of order-compatible nested $\frac{1}{2n}$-chains $\{R^\ell_n\}_{n\in\mathbb{N}}$ from $f^{-1}(z_\ell)$ to $z_{\ell+1}$ such that, setting 
    $$
    R^\ell=\big(\bigcup_n\widehat{R^\ell_n}\big)\setminus\{f^{-1}(z_\ell),z_{\ell+1}\},
    $$
    it satisfies
    \begin{equation*}
        \operatorname{otp}(R^\ell,\le_\infty)=\xi_\ell'.
    \end{equation*}
    Then, we define the sequence of nested $\frac{1}{2n}$-chains $\{S_n^\ell\}_{n\in\mathbb{N}}$, where each $S^\ell_n$ is obtained by deleting both the first and the last points of the chain $R_n^\ell$, that is, by setting 
    $$
    S_n^\ell={R^\ell_n}_{|\operatorname{dom}(R_n^\ell)\setminus\{0,\max(\operatorname{dom}(R_n^\ell))\}}.
    $$
    Notice that 
    $$
    \bigcup_n\widehat{S^\ell_n}=R^\ell.
    $$ 
    This step of removing the endpoints from the already-obtained sequences of nested chains $\{R^\ell_n\}_{n\in\mathbb{N}}$ is necessary to ``glue'' them together in the desired manner in the following. 
   
    For every $n\in\mathbb N$, setting $u^\ell_n$ and $v^\ell_n$ to be respectively the first and last points of the chain $S^\ell_n$, that is, $u^\ell_n=S^\ell_n(0)$ and $v^\ell_n=S^\ell_n(\max(\operatorname{dom}(S^\ell_n))$, we have
    \begin{equation}\label{cond_primo_ultimo}
        d(z_\ell,u^\ell_n)<\frac{1}{2n}\quad\text{and}\quad d(z_{\ell+1},f(v^\ell_n))<\frac{1}{2n},
    \end{equation}
    because $R_n^\ell$ is a $\frac{1}{2n}$-chain.

    Notice that we can define the sequence of chains $\{S^\ell_n\}_{n\in\mathbb N}$ by choosing points whose classes are in supp$(A\,\dot-\,[p])$. 
    More precisely, whenever we must build an order $\xi_\ell'$ of the form $\omega+k$ or $h+\omega^*$, it is enough to consider the points in $[z_\ell]$ and in $[z_{\ell+1}]$. On the other hand, if $\xi_\ell'$ contains $\zeta$,
    we perform the construction in the proof of Lemma \ref{lem_partial} taking $z=w_\ell$ (remind that $[w_\ell]\in B_2$ and so $[w_\ell]\notin\{[z_\ell],[z_{\ell+1}]\}$).

    %We need to select other points, and we choose such points in the orbit  $\mathcal{O}(w_\ell)=[w_\ell]$ ( remind that $[w_\ell]\in B_2$).
    % $[w_\ell]\in A\setminus\{[z_\ell],[z_{\ell+1}]\}$.
    Furthermore, by construction, the sequences of chains $\{S^\ell_n\}_{n\in\mathbb{N}}^{\ell\in\omega^*}$ satisfy
    \begin{equation}\label{disg_aciclicità}
        \left(\,\bigcup_n\widehat{S^{\ell_1}_n}\,\right)\cap\left(\,\bigcup_{n}\widehat{S^{\ell_2}_n}\,\right)=\varnothing,\quad\text{for every }\ell_1\neq \ell_2.
    \end{equation}

    % In fact, when we construct $S_n^k$, in the case $\xi_k$ is of the type of $\omega + h$ or $h+\omega^*$ for some $h\in \mathbb{N}_0$, we consider only the points of $\mathcal{O}(z_k)$ and of $\mathcal{O}(z_{k+1})$. Otherwise, if $\xi_k$ is of the type of $l+\zeta+m$, it is possible to build the chain $S_n^k$ considering  also points in the orbit of a point $w_k\in \mathcal{I}$. Since $\mathcal{I}$ is a comeagre set, it is always possible to select the points $w_k$ such that the points of the set $\{z_k\,, w_k\}_k$ have pairwise disjoint orbits.

    In order to obtain a sequence of order-compatible nested $1/n$-chains $\{C_n\}_{n\in\mathbb{N}}$ from $p$ to $q$, for every $n\in\mathbb{N}$ we first define the chain
    $$
    T_n=\,S^{-n}_n\,\dot\sqcup\,S^{-n+1}_n\,\dot\sqcup\dots\dot\sqcup\,S^{-2}_n\,\dot\sqcup\,S^{-1}_n,
    $$
    where, given two chains $A:\,a_0,\,a_1,\dots,\,a_{m_1}$ and $B:\,b_0,\,b_1,\dots,\,b_{m_2}$, by $A\,\dot\sqcup\,B$ we mean the chain
    $$
    A\,\dot\sqcup\,B:\,a_0,\,a_1,\dots,\,a_{m_1},\,b_0, \,b_1,\dots,\,b_{m_2}.
    $$
    Notice that, in general, the chain $T_n$ does not go from $p$ to $q$.
    
    \medskip
    \textbf{STEP 2.} Let us prove now that $T_n:\,x_0,\,x_1,\dots,\,x_m$ is a $1/n$-chain, for every $n\in\mathbb N$. 
    We need to check that
    \begin{equation}\label{dis-chain}
        d(f(x_i),x_{i+1})<\frac{1}{n}
    \end{equation}
    for every $i=0,1,\dots,m$.
    
    If $x_i$ and $x_{i+1}$ belong to the same chain $S^\ell_n$ for some $\ell\in\{-n,\dots,-1\}$, since $S^\ell_n$ is a $\frac{1}{2n}$-chain, then inequality \eqref{dis-chain} trivially holds.
    
    If $x_i\in\widehat{S^{\ell-1}_n}$ and $x_{i+1}\in\widehat{S^\ell_n}$, for some $\ell\in\{-n+1,\dots,-1\}$, then $x_i=v^{\ell-1}_n$ and $x_{i+1}=u^{\ell}_n$. By \eqref{cond_primo_ultimo}, we have 
    $$
    d(f(x_i),x_{i+1})=d(f(v^{\ell-1}_n),u^{\ell}_n)\le d(f(v^{\ell-1}_n),z_{\ell})+d(z_{\ell},u^{\ell}_n)<\frac{1}{n}.
    $$
    Therefore, $T_n$ is a $1/n$-chain. 
    However, $T_n$ is not a chain from $p$ to $q$.
    Then, for every $n\in\mathbb{N}$, we define a $1/n$-chain from $p$ to $q$ as
    $$
    C_n:\,p\,\dot\sqcup\,T_n\,\dot\sqcup\,q.
    $$
    Indeed, the first point of $T_n$ is $u^{-n}_n$, which satisfies $d(z_{-n},u^{-n}_n)<\frac{1}{2n}$. 
    Moreover, by \eqref{eq_z_n} we have  $d(f(p),z_{-n})<\frac{1}{2n}$.
    Therefore,
    $$
    d(f(p),u^{-n}_n)\le d(f(p),z_{-n}) + d(z_{-n},u^{-n}_n)<\frac{1}{n}.
    $$ 
    Similarly, the last point of $T_n$ is $v^{-1}_n$ and by \eqref{cond_primo_ultimo} we have
    $$
    d(f(v^{-1}_n),q)=d(f(v^{-1}_n),z_0)<\frac{1}{2n}<\frac{1}{n}.
    $$

    Notice that, for every $n\in\mathbb{N}$, we have $\widehat{C_n}\subseteq \widehat{C_{n+1}}$, because $\widehat{T_n}\subseteq\widehat{T_{n+1}}$. Moreover, \eqref{disg_aciclicità} guarantees that the sequence $\{C_n\}_{n\in\mathbb N}$ is a sequence of acyclic nested chains. 
    Furthermore, the relative order in which two points appear in the chain $C_n$ does not change in the chain $C_{n+1}$ for every $n\in\mathbb{N}$.
    Then, $\{C_n\}_{n\in\mathbb N}$ is a sequence of order-compatible nested chains. 
    
    By construction, setting $C=\big(\bigcup_n \widehat{C_n}\big)\setminus\{p,q\}$, we have that
    \begin{equation*}
        \text{otp}(C,\le_\infty)=\text{otp}(\cup_n \widehat{T_n},\le_\infty)=\sum_{\ell\in\omega^*}\xi'_\ell=\xi.
    \end{equation*}
    %the order-type of $(C,\leq_\infty)$ coincides with $\sum_{\ell\in\zeta}\xi_\ell$. 
    Thus, condition A) holds. 
    Since we used only points whose classes lie in $$B_1\cup B_2\cup\{[p],\,[q]\},$$
    condition B) holds as well.
    Moreover, by the definition of $T_n$, since $\bigcup_n \widehat{S}^\ell_n \cap [p]=\varnothing$ for every $\ell\in \omega^*$, it follows that 
    $$
    \widehat{T_n}\cap[p]=\varnothing\quad\forall n\in\mathbb N.
    $$
    Then, $\big(\bigcup_n \widehat{C_n}\big) \cap[p]=\{p\}$ and so, condition C) holds. 
    Finally, by construction, for every $\ell<-1$ and for every $n\in\mathbb N$, since $\widehat{S_n^\ell}\subseteq[z_\ell]\cup[z_{\ell+1}]\cup[w_\ell]$ and since $([z_\ell]\cup[z_{\ell+1}]\cup[w_\ell])\cap[q]=\varnothing$, it follows that $\widehat{S^\ell_n}\cap[q]=\varnothing$ and
    $$
    \widehat{S^{-1}_n}\cap[q]=\{f^{-k_{-1}}(q),\,f^{-k_{-1}+1}(q),\dots,\,f^{-1}(q),\,q\}.
    %\quad\forall n\in\mathbb N.
    $$
    Hence, by the definition of $S_n^{-1}$, condition D) holds.
    % $$C\cap[y]=\{f^{-k_{-1}}(y),\,f^{-k_{-1}+1}(y),\dots,\,f^{-1}(y)\},$$

    % Note that, for every $n\in\mathbb{N}$, we have $\widehat{C_n}\subseteq \widehat{C_{n+1}}$. Indeed, besides $x$ and $y$, the points used to connect $x$ and $y$ to the chain $T_n$ are points that constitute the chain $T_{n+1}$ and, therefore, $\widehat{C_n}\subseteq\widehat{T_{n+1}}\cup\{x,y\}\subseteq\widehat{C_{n+1}}$.

    Then, the points A), B), C), and D) are satisfied for $\alpha=2$.

    \medskip\textbf{STEP 3.} Let us proceed now by transfinite induction.
    Let $\alpha>2$ be a countable ordinal number.
    Assume that for every $\beta<\alpha$, properties A), B), C), and D) are satisfied. 
    More precisely, given an arbitrary $\beta$-structure $B$ in $\mathcal{I}_{/_\sim}$ and $(p,q)\in\mathcal{M}$, for any countably infinite scattered order $\xi$ with rank$_{VD}(\xi)=\gamma\le\beta$, we can find a sequence of order-compatible nested chains $\{C_n\}_{n\in\mathbb{N}}$ from $p$ to $q$ such that the ordered limit set $(C,\le_\infty)$ has order-type $\xi$ and $C=\big(\bigcup_n \widehat{C_n}\big)\setminus\{p,q\}$ consists of points $z$ with classes $[z]$ in $\operatorname{supp}(B)\cup\{[p],\,[q]\}$.
    Moreover, setting
    $$C_n: \,x_0=p,\,x_1,\dots,\,x_{m-1},\,x_m=q,$$
    if $x_j\in [p]$ for some $j<m$, then $x_{h+1}=f(x_{h})\in [p]$ for every $0\le h<j$; whereas if $x_j\in[q]$ for some $j<m$, then $x_{h}=f(x_{h-1})\in[q]$ for every $j<h\le m$.
    
    Fix an $\alpha$-structure $A=\{B_i\mid i<\omega\}$ in $\mathcal{I}_{/_\sim}$ and $(x,y)\in\mathcal{M}$.    
    Set
    $$A'=A\dot{-} \{[x],[y]\}=\{B_i'\mid i<\omega\},$$
    where $B_i'=B_i\dot{-}\{[x],[y]\}$, and let $\xi$ be a countable scattered order such that rank$_{VD}(\xi)=\gamma\le \alpha$. 
    If $\gamma<\alpha$, then there is a $\gamma$-structure $B$ within $A'$ and, by inductive hypothesis, we know that we can define a sequence of order-compatible nested chains $\{C_n\}_{n\in\mathbb{N}}$ that satisfies the claim.
    
    Assume that rank$_{VD}$$(\xi)=\alpha$, which means that
    $$
    \xi=\sum_{\ell\in I}\xi_\ell,
    $$
    where $I\in \{\omega,\,\omega^*,\,\zeta, K\}$ and rank$_{VD}$$(\xi_\ell)<\alpha$ for every $\ell\in I$. 
    As observed in \textbf{STEP 1}, we can equivalently write $\xi$ as
    $$
    \xi=\sum_{\ell\in I'}h_\ell+\xi_\ell+k_\ell=\sum_{\ell\in I'}\xi_\ell'
    $$
    where $\xi_\ell$ is a countable scattered order, $\xi'_\ell=h_\ell +\xi_\ell +k_\ell$ with $h_\ell,k_\ell\in \mathbb{N}_0$  and rank$_{VD}(\xi_\ell)<\alpha$ for every $\ell\in I'$, where $I'\in\{\omega,\,\omega^*,\,\zeta, K'\}$.
    As in \textbf{STEP 1}, we will show just the case $I'=\omega^*=\{\dots,\,-n,\dots,\,-2,\,-1\}$.
    
    % \textcolor{red}{Since $f$ is uniformly continuous, let $0<\delta_1<1/2$ be such that, for every $u,v\in X$ such that $d(u,v)<\delta_1$, we have $d(f(u),f(v))<1/2$.
    % Then, for the same reason, we can define a sequence of positive real numbers $\{\delta_n\}_{ n\in\mathbb N}$ such that, for every $n>1$, we have $\delta_n<\min\{\delta_{n-1},\,\frac{1}{2n}\}$ and, for every $n\in\mathbb N$
    % \begin{equation}\label{continuous}
    %     d(u,v)<\delta_n\implies d(f(u),f(v))<\frac{1}{2n}.
    % \end{equation}}
    \begin{itemize}
        \item If $\alpha=\beta+1$ for some ordinal number $\beta$, then the $\alpha$-structure $A'$ is a countable family of $\beta$-structures, that is
        $$
        A'=\{B_i'\mid i<\omega\},
        $$
        where $B_i'$ is a $\beta$-structure for every $i<\omega$.
        %Up to considering a bijection between the set $\{i\mid i<\omega\}$ and the set of integer numbers $\mathbb Z$, we can index the families $B_i$ with integers.
        Up to considering a bijection between the set $\{i\mid i<\omega\}$ and the set $\omega^*=\{\dots,\,-n,\dots,\,-2,\,-1\}$, we can enumerate the $\beta$-structures forming $A'$ with $\omega^*$.
        For every $\ell\in\mathbb \omega^*$, take a class $[v_\ell]\in B_\ell'$. 
        As before, for every $\ell\in\omega^*$ there exists a point $z_\ell\in[v_\ell]$ such that
        \begin{equation}\label{eq_zl_ind}
            d(f(x),z_\ell)<-\frac{1}{2\ell}.
        \end{equation}
        Set $z_0=y$.
        %and let $\{\delta_n\}_{n\in\mathbb N}$ be a sequence as defined in \eqref{cond_delta_n}. 
        For every $\ell\in\omega^*$, let us now define the sequence of order-compatible nested $\frac{1}{2n}$-chains $\{S^\ell_n\}_{n\in\mathbb{N}}$.
        %from either $f^{-1}(z_\ell)$ or $z_\ell$ or $f(z_\ell)$ to $z_{\ell+1}$, where $\{\delta_n\}_{n\in\mathbb N}$ verifies \eqref{continuous}.
        With respect to what we did in \textbf{STEP 1}, we need to add an intermediate step, defining an opportune sequence of order-compatible nested $\frac{1}{2n}$-chains $\{G^\ell_n\}_{n\in\mathbb{N}}$, applying the inductive hypothesis.
        \\
        Fix $\ell\in\omega^*$.
        We want to define a sequence of order-compatible nested $\frac{1}{2n}$-chains $\{R^\ell_n\}_{n\in\mathbb{N}}$ from $f^{-1}(z_\ell)$ to $z_{\ell+1}$ such that, setting
            $$
            R^\ell = \big(\bigcup_n\widehat{R^\ell_n}\big) \setminus\{f^{-1}(z_\ell),z_{\ell+1}\},
            $$
            we have 
            \begin{equation*}
                \operatorname{otp}(R^\ell,\le_\infty)=
                h_\ell+\xi_\ell+k_\ell.
            \end{equation*}
            Since $(f^{h_\ell-1}(z_\ell),f^{-k_\ell}(z_{\ell+1}))\in\mathcal{M}$ and rank$_{VD}(\xi_\ell)<\alpha$, by applying the inductive hypothesis with $B=B_\ell'$, $p=f^{h_\ell-1}(z_\ell)$ and $q=f^{-k_\ell}(z_{\ell+1})$, there exists a sequence of order-compatible nested $\frac{1}{2n}$-chains $\{G^\ell_n\}_{n\in\mathbb{N}}$ from $f^{h_\ell-1}(z_\ell)$ to $f^{-k_\ell}(z_{\ell+1})$ such that properties A), B), C) and D) hold. 
            In particular, setting 
            $$
            G^\ell=\big(\bigcup_n\widehat{G^\ell_n}\big)\setminus\{f^{h_\ell-1}(z_\ell),f^{-k_\ell}(z_{\ell+1})\},
            $$ 
            condition A) assures that we have $\operatorname{otp}(G^\ell,\le_\infty)=\xi_\ell$, condition D) implies that
            $$
            G^\ell\cap[z_{\ell+1}]=\{f^{-k_\ell-g_\ell}(z_{\ell+1}),\,f^{-k_\ell-g_\ell+1}(z_{\ell+1}),\dots,f^{-k_\ell-1}(z_{\ell+1})\},
            $$
            for some $g_\ell\in\mathbb N_0$ and condition C) implies
            $$
            G^\ell\cap[z_\ell]=\{f^{h_\ell}(z_\ell),\,f^{h_\ell+1}(z_\ell),\dots,\,f^{h_\ell+l_\ell}(z_\ell)\}
            $$
            for some $l_\ell\in\mathbb N_0$.
            Then, for every $n\in\mathbb{N}$ we set
            $$
            R^\ell_n:(f^{-1}(z_\ell),\,z_\ell,\dots,\,f^{h_\ell-2}(z_\ell))\,\dot\sqcup\,G^\ell_n\,\dot\sqcup\,(f^{-k_\ell+1}(z_{\ell+1}),\dots,\,f^{-1}(z_{\ell+1}),\,z_{\ell+1}).
            $$
            Notice that if $h_\ell=0$ (or $k_\ell=0$), we do not need to add points on the left (respectively right) of the chains $G^\ell_n$. 
            Thus, for every $n\in\mathbb{N}$, we define
            $$
            S^\ell_n={R^\ell_n}_{|\text{dom}(R^\ell_n)\setminus\{0,\max(\text{dom}(R^\ell_n)\}},
            $$
            so that
            $$
            R^\ell=\bigcup_n\widehat{S^\ell_n}.
            $$
            % $$
            % S^\ell_n:(z_\ell,\dots,\,f^{h_\ell-2}(z_\ell))\,\dot\sqcup\,G^\ell_n\,\dot\sqcup\,(f^{-k_\ell+1}(z_{\ell+1}),\dots,\,f^{-1}(z_{\ell+1})).
            % $$
            Observe that for every $n\in\mathbb{N}$, setting $u^\ell_n=S^\ell_n(0)$ to be the first point of the chain $S^\ell_n$, we have
            \begin{equation}\label{eq_succ_ini}
                d(u^\ell_n,z_\ell)<\frac{1}{2n}.
            \end{equation}
            Indeed, if $h_\ell>0$, by construction, it follows that
            $
            u^\ell_n=R^\ell_n(1)=z_\ell.
            $
            Otherwise, if $h_\ell=0$, we have $u^\ell_n=R^\ell_n(1)=G^\ell_n(1)$ and, since $G^\ell_n$ is a $\frac{1}{2n}$-chain, we find
            $$
            d(u^\ell_n,z_\ell)=d(G^\ell_n(1),f(G^\ell_n(0)))<\frac{1}{2n}.
            $$
            Moreover, for every $n\in\mathbb{N}$, since $R^\ell_n$ is a $\frac{1}{2n}$-chain, setting $v^\ell_n=S^\ell_n(\max(\text{dom}(S^\ell_n)))$ to be the last point of the chain $S^\ell_n$, we have 
            \begin{equation}\label{eq_succ_fin}
                d(f(v^\ell_n),z_{\ell+1})<\frac{1}{2n}.
            \end{equation}

        Let us now define the chain $C_n$ by concatenating the chains $S_n^\ell$ as in \textbf{STEP 2}, that is, for every $n\in\mathbb{N}$ we set
        $$
        C_n:x\,\dot\sqcup\,S^{-n}_n\,\dot\sqcup\,S^{-n+1}_n\,\dot\sqcup\,\dots\,\dot\sqcup\,S^{-1}_n\,\dot\sqcup\,y.
        $$
        By \eqref{eq_zl_ind}, \eqref{eq_succ_ini}, and \eqref{eq_succ_fin}, and employing reasoning analogous to that in \textbf{STEP 2}, it follows that $\{C_n\}_{n\in\mathbb{N}}$ forms a sequence of nested $1/n$-chains from $x$ to $y$.
        Set $C=\big(\bigcup_n\widehat{C_n}\big)\setminus\{x,y\}$. 
        If we show that such a sequence is also order-compatible, then 
        $$
        \operatorname{otp}(C,\le_\infty)=\sum_{\ell\in\omega^*} \xi_\ell'=\sum_{\ell\in\omega^*} h_\ell+\xi_\ell+k_\ell=\xi.
        $$
        We need to check that, for every $n\in\mathbb N$, the chain $C_n$ does not contain any cyclic sub-chain. 
        To do that, it is enough to see that, for every $\ell_1,\ell_2\in\omega^*$ and for every $n\in\mathbb N$,
        \begin{equation}
            \label{condi_acic}
            \left(\bigcup_n\widehat{S^{\ell_1}_n}\right)\cap\left(\bigcup_n\widehat{S^{\ell_2}_n}\right)=\varnothing.
        \end{equation}
        In fact, for every $\ell\in\omega^*$ and for every $n\in\mathbb N$, by the inductive hypothesis, $S_n^\ell$ is acyclic, and so, the only way to have a cyclic sub-chain in $C_m$, for some $m\in\mathbb N$, is that \eqref{condi_acic} is false. 
        Let us now show \eqref{condi_acic}. 
        By construction and by the inductive hypothesis (condition B) namely), for every $\ell\in\omega^*$ and for every $u\in\cup_n\widehat{S^\ell_n}$, we have $[u]\in\operatorname{supp}(B_\ell')\cup[z_{\ell+1}]$, and since the sets supp$(B_\ell')$ are pair-wise disjoint, the only classes in $\mathcal{I}_{/\sim}$ whose points can be used to define more than one sequence of order-compatible nested chains $\{S^\ell_n\}_{n\in\mathbb N}$ are the classes $[z_\ell]$. 
        Now, since for every $\ell,\ell'\in\omega^*$ with $\ell'\ne\ell$ and $\ell'\ne\ell-1$ we have
        $$
        [z_\ell]\cap\left(\bigcup_n\widehat{S^{\ell'}}\right)=\varnothing,
        $$
        it is enough to show that, for every $\ell<-1$,
        \begin{equation}\label{disgiunti}
            \left(\bigcup_n\widehat{S^\ell_n}\right)\cap\left(\bigcup_n\widehat{S^{\ell+1}_n}\right)=\varnothing.
        \end{equation}
        Note that
        $$
        \left(\bigcup_n\widehat{S^\ell_n}\right)\cap\left(\bigcup_n\widehat{S^{\ell+1}_n}\right)\subseteq[z_{\ell+1}]
        $$
        and, since conditions D) and C) respectively assure that
        $$\left(\bigcup_n\widehat{S^\ell_n}\right)\cap[z_{\ell+1}]=\{f^{-k_\ell-g_\ell}(z_{\ell+1}),\,f^{-k_\ell-g_\ell+1}(z_{\ell+1})\dots,\,f^{-1}(z_{\ell+1})\}$$
        and 
        $$\left(\bigcup_n\widehat{S^{\ell+1}_n}\right)\cap[z_{\ell+1}]=\{z_{\ell+1},\,f(z_{\ell+1}),\dots,
        \,
        f^{h_{\ell+1}+l_{\ell+1}}(z_{\ell+1})\}$$
        and, so, equation \eqref{disgiunti} is true.
        Moreover, since supp$(B_\ell')\cap\{[x],\,[y]\}=\varnothing$ for every $\ell\in\omega^*$ by the definition of $B'_\ell$, we have that $\{C_n\}_{n\in\mathbb N}$ is a sequence of order-compatible nested $1/n$-chains from $x$ to $y$ and so condition A) holds.
        \\
        Condition B) is true, too. In fact, since by the inductive hypothesis $\{G_n^\ell\}_{n\in\mathbb{N}}$ consists of points whose classes are in $\operatorname{supp}(B_\ell')\cup\{[z_{\ell+1}]\}$, it follows that the sequence of $\frac{1}{2n}$-chains $\{S_n^\ell\}_{n\in\mathbb{N}}$ consists of points whose classes are in
        $$\operatorname{supp}(B_\ell')\cup\{[z_{\ell+1}]\}.$$
        Therefore, the classes of points used to define the chains $C_n$ are in
        $$
        \operatorname{supp}(A)\cup\{[x],\,[y]\}=\left(\,\bigcup_{\ell<\omega^*}\operatorname{supp}(B_\ell')\,\right)\cup\{[x],\,[y]\}.
        $$
        As seen in \textbf{STEP 2}, where we considered the sum of orders indexed by $\omega^*$, also in this case we have 
        $$
        \widehat{C_n}\cap[x]=\{x\},
        $$
        and so, condition C) is naively true.
        \\
        Finally, we have that, for every $\ell<-1$ and for every $n\in\mathbb N$, by construction,
        $$
        \widehat{S^\ell_n}\cap[y]=\varnothing,
        $$
        because $S^\ell_n$ consists of points with classes in supp$(B_\ell')\cup\{[z_{\ell+1}]\}$ and $$[y]\notin\operatorname{supp}(B_\ell')\cup\{[z_{\ell+1}]\}.$$
        Moreover, by construction, for every $n\in\mathbb N$ and for $\ell=-1$, we have
        $$\widehat{S^{-1}_n}\cap[y]=\{f^{-k_{-1}-g_{-1}}(y),\,f^{-k_{-1}-g_{-1}+1}(y),\dots,\,f^{-1}(y)\}.$$
        Thus, by the definition of $S_n^{-1}$
        condition D) holds. Hence, the sequence of order-compatible nested $1/n$-chains $\{C_n\}_{n\in\mathbb N}$ verifies conditions A), B), C) and D).
        
        \item If $\alpha$ is a limit ordinal, we have 
        $$
        A'=\{B'_i\mid i<\omega\},
        $$
        where $B'_i$ is a $\gamma_i$-structure and $\{\gamma_i\}_{i<\omega}$ is an increasing sequence cofinal in $\alpha$.
        Up to considering a bijection between the set $\{i\mid i<\omega\}$ and the set $\omega^*=\{\dots,\,-n,\dots,\,-2,\,-1\}$, we can enumerate the structures forming $A'$ with $\omega^*$.
        For every $\ell\in \omega^*$, set $\beta_\ell=\text{rank}_{VD}(\xi_\ell')\le\text{rank}_{VD}(\xi_\ell)+1$. Note that, since $\alpha$ is a limit ordinal, we have $\beta_\ell<\alpha$. 
        Thus, there exist $\aleph_0$ ordinal numbers $\gamma_i$ such that $\beta_\ell<\gamma_i<\alpha$.
        Then, it is possible to define an injective function $i(\ell)$ such that, for every  $\ell\in\omega^*$, we have $\gamma_{i(\ell)}>\beta_\ell$.
        Up to considering this injective function $i(\ell)$, we can say that $\text{rank}_{VD}(\xi_\ell')<\gamma_\ell$ for every $\ell\in\omega^*$ and we can index both the family $\{\xi_\ell\}_{\ell\in\omega^*}$ and the family $\{B'_{\ell}\}_{\ell\in\omega^*}$ with $\omega^*$. 
        For every $\ell\in\omega^*$, take a class $[u_\ell]\in B'_\ell$. 
        As before, for every $\ell\in\omega^*$ there exists a point $z_\ell\in[u_\ell]$ such that
        $$
        d(f(x),z_\ell)<-\frac{1}{2\ell}.
        $$
        Set $z_0=y$.
        For every $\ell\in \omega^*$, since $B'_\ell$ is a $\gamma_\ell$-structure with $\gamma_\ell<\alpha$ and rank$_{VD}(\xi'_\ell)=\beta_\ell<\gamma_\ell$, by applying the inductive hypothesis with $p=f^{-1}(z_\ell)$ and $q=z_{\ell+1}$, there is a sequence of order-compatible nested $\frac{1}{2n}$-chains $\{R_n^\ell\}_{n\in\mathbb{N}}$ from $f^{-1}(z_\ell)$ to $z_{\ell+1}$ that satisfies:
        \begin{itemize}
            \item[i)] setting $R^\ell=(\bigcup_n\widehat{R^\ell_n})\setminus\{f^{-1}(z_\ell),z_{\ell+1}\}$, we have that $(R,\le_\infty)$ has order type $\xi_\ell'$;
            \item[ii)] the set $R^\ell$ consists of points whose classes are in $\operatorname{supp}(B_\ell' )\cup\{[z_{\ell+1}]\}$ (recall $[f^{-1}(z_\ell)]\in\operatorname{supp}(B_\ell')$);
            \item[iii)] setting $R^\ell_n: x_0=f^{-1}(z_\ell),\,x_1,\dots,\,x_{m-1},\,x_m=z_{\ell+1}$, if $x_j\in[f^{-1}(z_\ell)]$ for some $j<m$, then $x_{h+1}=f(x_h)\in[f^{-1}(z_\ell)]$ for every $0\leq h<j$;
            \item[iv)]if $x_j\in[z_{\ell+1}]$ for some $j<m$, then $x_h=f(x_{h-1})\in[z_{\ell+1}]$, for every $j<h\leq m$.
        \end{itemize}
        Thus, for every $\ell\in \omega^*$ and $n\in\mathbb{N}$, we define
        $$
        S^\ell_n={R^\ell_n}_{|\operatorname{dom}(R^\ell_n)\setminus\{0,\max(\operatorname{dom}(R^\ell_n))\}}.
        $$
        If we concatenate the chains $S_n^\ell$ as done in \textbf{STEP 2}, we obtain a sequence of order-compatible nested $1/n$-chains $\{C_n\}_{n\in\mathbb{N}}$ from $x$ to $y$.
        By applying reasoning analogous to the case $\alpha=\beta+1$, it follows that the sequence $\{C_n\}_{n\in\mathbb N}$ satisfies conditions A), B), C), and D). 
        % such that the classes of points used to define the chains $C_n$ are in
        % $$
        % \operatorname{supp}(A)\cup\{[x],\,[y]\}=\left(\,\bigcup_{\ell\in\mathbb Z}\operatorname{supp}(D_\ell)\,\right)\cup\{[x],\,[y]\}.
        % $$
        % Moreover, setting $C=\big(\cup_n \widehat{C_n}\big)\setminus\{x,y\}$, we have $\text{otp}(C,\le_\infty)=\sum_{\ell\in\mathbb{Z}} \xi_\ell=\xi$.
        % As seen in \textbf{STEP 2}, also in this case, where we considered the sum of orderings indexed by $\zeta$, we have 
        % $$
        % \widehat{C_n}\cap[y]=\{y\}.
        % $$

    \end{itemize}

    Thus, we proved that, for every scattered countably infinite order $\xi$ and for every $(x,y)\in\mathcal{M}$, we can find a sequence of order-compatible nested chains $\{C_n\}_{n\in\mathbb N}$ from $x$ to $y$ such that, setting $C=(\bigcup_n\widehat{C_n})\setminus\{x,y\}$, the order type of $(C,\le_\infty)$ is $\xi$, and so
    $$\xi\in\Omega(x,y).$$
    
    % not only a collection of $\frac{1}{2n}$-chains $\{S^\ell_n\}_{n\in\mathbb N}^{\ell\in I}$ 
    % %such that $\operatorname{otp}(S^\ell=\bigcup_n\widehat{S^\ell_n},\le_\infty)=\xi_\ell$
    % that can be concatenated to obtain a sequence of $1/n$-chains $\{C_n\}_{n\in\mathbb{N}}$ from $x$ to $y$, but also that these families are pairwise disjoint; that is,
    % $$
    % \left(\,\bigcup_{n\in\mathbb{N}}\widehat{S^{\ell_1}_n}\,\right)\cap\left(\,\bigcup_{n\in\mathbb{N}}\widehat{S^{\ell_2}_n}\,\right)=\varnothing,
    % $$
    % for every $\ell_1\neq \ell_2$.
\end{proof}

\section*{Acknowledgements}
\textit{The authors thank Alessandro Della Corte for his help.}

% \section*{Declarations}

%  The authors declare that they have no conflicts of interest.

%  This research did not receive any specific grant from funding agencies in the public, commercial, or not-for-profit sectors.

\end{document}